%2multibyte Version: 5.50.0.2953 CodePage: 1253
\documentclass{article}%
\usepackage{amsfonts}
\usepackage{amsmath}
\usepackage{amssymb}
\usepackage{graphicx}%
\setcounter{MaxMatrixCols}{30}
%TCIDATA{OutputFilter=latex2.dll}
%TCIDATA{Version=5.50.0.2953}
%TCIDATA{Codepage=1253}
%TCIDATA{CSTFile=40 LaTeX article.cst}
%TCIDATA{Created=Thursday, August 14, 2014 16:58:25}
%TCIDATA{LastRevised=Monday, November 14, 2016 13:45:24}
%TCIDATA{<META NAME="GraphicsSave" CONTENT="32">}
%TCIDATA{<META NAME="SaveForMode" CONTENT="1">}
%TCIDATA{BibliographyScheme=Manual}
%TCIDATA{<META NAME="DocumentShell" CONTENT="Standard LaTeX\Standard LaTeX Article">}
%TCIDATA{Language=American English}
%BeginMSIPreambleData
\providecommand{\U}[1]{\protect\rule{.1in}{.1in}}
%EndMSIPreambleData
\newtheorem{theorem}{Theorem}[section]

\newtheorem{corollary}[theorem]{Corollary}

\newtheorem{definition}[theorem]{Definition}
\newtheorem{example}[theorem]{Example}

\newtheorem{remark}[theorem]{Remark}

\newenvironment{proof}[1][Proof]{\noindent\textbf{#1.} }{\ \rule{0.5em}{0.5em}}
\begin{document}

\title{Topological equivalences for one-parameter bifurcations of maps}
\author{Balibrea\thanks{Department of Mathematics, University of Murcia, Campus de
Espinardo, 30100 Murcia, Spain}, Francisco, Oliveira\thanks{Center for
Mathematical Analysis Geometry and Dynamical Systems, Department of
Mathematics, Instituto Superior T\'{e}cnico, Universidade de Lisboa, Av.
Rovisco Pais, 1, 1049-001 Lisboa, Portugal}, Henrique M. and
Valverde\thanks{Department of Mathematics, University of Castilla-La Mancha,
2071-Albacete, Spain}, Jose C.}
\maketitle

\begin{abstract}
\noindent Homeomorphisms allowing us to prove topological equivalences between
one-parameter families of maps undergoing the same bifurcation are constructed
in this paper. This provides a solution for a classical problem in bifurcation
theory that was set out three decades ago and remained unexpectedly
unpublished until now.\bigskip

Keywords: Topological equivalence, Topological conjugacy, Normal forms,
Equivalence of local bifurcations

AMS 2010 Classification: Primary 37C15, Secondary 37G05

\end{abstract}

%\begin{keyword}%Revisar
%Nonlinear dynamics \sep Bifurcations \sep Normal forms \sep Stability
%\sep Topological equivalences.
%\vspace*{0.5cm}
%\emph{MSC2010:}  34A34 \sep 37G10 \sep 34D20
%\end{keyword}

\section{Introduction}

One of the fundamental problems in the study of nonlinear dynamical systems is
to know if the behavior of a system changes under small perturbations. Roughly
speaking, when the dynamics of a system changes, it is said that a
\emph{bifurcation} occurs. On the contrary, if no change happens, it is said
that the system is \emph{structurally stable}.

Usually, perturbations of a dynamical system are associated to variations of
parameters involved in the equation(s) which define such a system (see
\cite{Guc77} for the original definition). Since any value of the parameters
defines a particular dynamical system, if we consider all together, we have a
parametric family of them. When studying bifurcations of parametric families
of dynamical systems, two techniques allow us to simplify this study, namely,
the \emph{center manifold theory} and the \emph{normal forms} method. The
first one provides a reduction of the dimensionality (see for example
\cite{Car81}), while the second one provides a simplification of the
nonlinearity, providing prototypes of behavior for vast classes of nonlinear
systems. The method of the normal forms began with Poincar\'{e} \cite{Poi29}
and was developed by Arnold (e.g. see \cite{Arn83}, \cite{AAIS94} or
\cite{GuHo83}). In a few words, this method consists in constructing a simple
polynomial family, named \emph{normal form}, using the conditions that provide
a bifurcation, and demonstrating that any family satisfying those conditions
is topologically equivalent to this simpler polynomial family. Note that, in
such a case, the dynamical behavior of any of these families can be inferred
from the dynamical behavior of the normal form which is obviously simpler to
be studied.

In this work, we consider local one-parameter bifurcations for maps.
Hartman-Grobman's theorem establishes that to study local bifurcations in
parametric families, it suffices to consider those parameter values for which
the corresponding map presents a non-hyperbolic fixed point, that is, the
Jacobian matrix evaluated at this fixed point has eigenvalues of modulus 1. In
particular, the simplest cases appear when only an eigenvalue has modulus 1,
i.e., 1 or -1. When the eigenvalue is equal to 1, three kinds of local
bifurcations can appear called \emph{fold, transcritical} and \emph{pitchfork}%
; while when the eigenvalue is equal to -1 a bifurcation named \emph{flip} or
\emph{period doubling} can appear. The first three ones imply the change in
the number and stability of the fixed points; while the fourth one involves
additionally the appearance of a 2-periodic orbit (see for instance
\cite{Wig90}).

In \cite{Wig90}, it is shown that under some nonzero conditions up to the
third order of the derivatives, these local bifurcations of one-parameter
families of maps (fold, transcritical, pitchfork, flip) appear. In
\cite{BaVa99}, similar results are obtained for higher order nonzero
conditions, generalizing the necessary conditions for the appearance of such bifurcations.

Although results in \cite{BaVa99, Wig90} prove that the same number of fixed
points (or fixed points and period-2 points in the flip case) appears with
identical type of stability, the problem of proving the topological
equivalence between any family verifying the same bifurcation conditions and
the corresponding simplest normal form posed originally in \cite{Arn83}
%Arnold 1983, p.285-286
remained unpublished until now, as claimed in \cite{Kuz95}.
%Kuznetsov writes in page 149: {\small$\ll$}\emph{Explicit formulation of the
%topological normal form theorems for these bifurcations is due to Arnold
%[1983] \cite{Arn83}. A complete proof, that the truncation of the
%higher-order terms in the normal forms results in locally topologically
%equivalent systems, happens to be unexpectedly difficult (see Newhouse, Palis
%\& Takens [1983] \cite{NPT83}, Arnol'd et al. [1994] \cite{AAIS94}) and
%remains unpublished.}{\small$\gg$}

Actually, as done in \cite{Arn83} and \cite{Kuz95} for the fold and flip
bifurcations under the lowest order conditions and in \cite{BaVa00} for the
fold, transcritical, pitchfork and flip bifurcations under higher order
conditions, after applying successive diffeomorphic changes of variables and
re-scalings, it is possible to demonstrate the topological equivalence between
a family satisfying the bifurcation conditions of a specific order and the non
truncated normal form of this same order. With non truncated normal form we
refer to the normal form with higher-order terms of the Taylor polynomial that
are not fixed by the bifurcation conditions. Nevertheless, that the truncation
of higher order terms does not affect the topological type had not been proved
yet. This is not a minor question since in cases like the Neimark-Sacker
bifurcation (see for instance \cite{BaVa99}) this truncation does affect its
topological type.

The present article gives a solution for this classical problem in bifurcation
theory, set out more than three decades ago in \cite{Arn83}. As a matter of
fact, we provide a complete proof of topological equivalences between any two
families of maps undergoing one of the above mentioned bifurcations in a more
general way than originally posed, as shown in the results and examples herein.

In this sense, our main result consists in the construction of a homeomorphism
$h$ of conjugation between any two order preserving real homeomorphisms $f$
and $g$ in suitable real open intervals $J$ and $I$ both with the same number
of fixed points with the same stabilities, i.e., a homeomorphism
$h:J\rightarrow I$, such that $h\circ f=g\circ h$. When the two homeomorphisms
$f$ and $g$ are order reversing, we construct a similar conjugacy in a
suitable neighborhood of the unique fixed point.

Despite the mathematical analysis focus on topological conjugations, this
construction, applied to the case of one-parameter families undergoing the
same kind of bifurcation, allows us to provide a solution for the mentioned
classical problem in bifurcation theory.

%we can write here a paragraph distinguishing universal unfolding and normal forms

%we can write a paragraph clarifying the concept of codimension

Of course, although the results are given for bifurcations of fixed points,
they also apply to periodic points when the relevant iterate of the map is
considered. For simplicity, we have stated all of our results in the context
of one-dimensional maps. However, they can also be used in order to draw these
bifurcations in one-parameter families of $n$-dimensional maps or, even more
generally, of Banach spaces.

The organization of this paper is as follows. In Section \ref{Section2}, we
construct conjugacies among any two homeomorphisms having the same number and
stability of fixed points. As a result, in Section \ref{Section3}, we show how
these results in the previous section allow us to solve the problem of
demonstrating the topological equivalence between any two one-parameter
families undergoing the same type of bifurcation, even in a more general way
than that in which it was posed originally. Finally, in Section \ref{Section4}
we present conclusions and future research directions.

\section{\label{Section2}Technique for the construction of topological
conjugacies}

In this section, we develop a technique to construct homeomorphisms which give
us a conjugacy between two homeomorphisms $g$ and $f$ satisfying similar
conditions concerning the number and stability of their fixed points. These
homeomorphisms will allows us to demonstrate the topological equivalence
between any family satisfying certain bifurcations conditions and the
corresponding normal form in Section \ref{Section3}.

Along this paper, we adopt definitions in \cite{Kuz95} concerning topological
conjugacy and topological equivalence. When nothing else stated, the letter
$n$ is reserved for natural numbers and Greek letters for real parameters.

In this article, we consider continuous maps with at most a countable number
of isolated fixed points, i.e., without accumulation points and separated by
non-empty intervals. Results for non-countable fixed points or with
accumulation points are not in the scope of this work. This avoids maps that
are coincident with the identity map in some non-empty interval and
topological neutral fixed points.

One important concept in this work is the following, that one can find in
\cite{Elaydi}.

\begin{definition}
A fixed point $x_{F}$ of a map $g$ is said to be \emph{semi-attracting from
the left} (resp. \emph{from the right}), if there exists $\eta>0$ such that
for $x\in\left(  x_{F}-\eta,x_{F}\right)  $ (resp. $x\in\left(  x_{F}%
,x_{F}+\eta\right)  $), $\lim_{n\longrightarrow+\infty}g^{n}\left(  x\right)
=x_{F}$.
\end{definition}

In view of this concept, the notion of a semi-repelling fixed point to the
left (resp. to the right) is clear. Actually, for a fixed point $x_{F}$ of an
increasing homeomorphism $g$, it is \emph{semi-repelling to the left} (resp.
\emph{to the right}), iff the same fixed point $x_{F}$ is semi-attracting from
the left (resp. right) for the inverse $g^{-1}$.

Notice that an attracting fixed point is semi-attracting from the left and
simultaneously from the right, and the same happens for repelling fixed points
that must be simultaneously semi-repelling to the left and to the right. In
particular, semi-attracting fixed points of decreasing homeomorphisms are
always attracting.

We will also use the term left semi-attracting (resp. left semi-repelling) and
right semi-attracting (resp. right semi-repelling) to simplify the
nomenclature of these concepts.

\begin{definition}
We shall call \emph{transverse\footnote{Relative to the diagonal, i.e., the
identity map.} fixed} point to any fixed one being attracting or repelling.
\end{definition}

For differentiable maps, every hyperbolic fixed point is transverse. But, the
reciprocal is not true since, for instance, the real map $x+x^{3}$ has the
non-hyperbolic transverse fixed point $0$.

\begin{definition}
We shall call \emph{mixed-stability fixed point} to any fixed point being left
semi-attracting and right semi-repelling or left semi-repelling and right semi-attracting.
\end{definition}

The above notions can be more refined, but in our work we only need the above
concepts which are related to bifurcation theory in metric spaces.

\subsection{Maps with one fixed point}

Here, we study the situation of only one fixed point for each map $g$ and
$f$.
%Our technique avoids the use of functional equations, like
%Schr\"{o}der equation. Using the Schr\"{o}der equation, we would have a unique
%homeomorphism, which is a stable fixed point in a Banach space with strong
%continuity and even differentiability properties. In our case the conjugation
%is not unique, but it is more general in the sense that we need only
%continuity and monotonicity.
The left and right of the fixed points are treated separately, this treatment
allows us to study maps with more than one fixed point in subsection
\ref{Main}.

\begin{theorem}
\label{XRStable} Let $g:I\longrightarrow g(I)$ and $f:J\longrightarrow f(J)$
two increasing homeomorphisms with domains $I=[b,x_{F}]$ and $J=[a,\overline
{x}_{F}]$ respectively. Suppose that they verify the following conditions:

%\item $g\left(  x_{F}\right)  =x_{F}$.
%\item $f\left(  \overline{x}_{F}\right)  =\overline{x}_{F}$.

\begin{enumerate}
\item $g\left(  x\right)  >x$, for all $x\in\left[  b,x_{F}\right)  $.

\item $f\left(  x\right)  >x$, for all $x\in\left[  a,\overline{x}_{F}\right)
$.

\item $x_{F}$ is the (unique) fixed point of $g$ and $\overline{x}_{F}$ is the
(unique) fixed point of $f$.
\end{enumerate}

Then, there exists a topological conjugacy between $g$ and $f$, i.e.,
%an increasing
a homeomorphism $h:J\longrightarrow I.$, not necessarily unique, such that%
\begin{equation}
g\circ h\left(  x\right)  =h\circ f\left(  x\right)  . \label{COnjug}%
\end{equation}

\end{theorem}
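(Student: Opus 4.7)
The plan is a classical Poincar\'e-style \emph{fundamental domain} construction. Because $g(x)>x$ on $[b,x_{F})$ and $x_{F}$ is the unique fixed point of $g$, the forward orbit $g^{n}(b)$ is strictly increasing and bounded by $x_{F}$; its limit is a fixed point in $[b,x_{F}]$, hence equal to $x_{F}$. Consequently the closed intervals $D_{n}:=[g^{n}(b),g^{n+1}(b)]$ for $n\ge 0$ cover $[b,x_{F})$, overlap only at common endpoints, and satisfy $|D_{n}|\to 0$. The same reasoning applied to $f$ produces fundamental domains $E_{n}:=[f^{n}(a),f^{n+1}(a)]$ tiling $[a,\overline{x}_{F})$.

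First I would pick any order-preserving homeomorphism $h_{0}:E_{0}\to D_{0}$ with $h_{0}(a)=b$ and $h_{0}(f(a))=g(b)$; the affine map does the job. Then I would \emph{force} the conjugacy relation by propagating $h_{0}$: for $x\in E_{n}$, write $x=f^{n}(y)$ with a unique $y\in E_{0}$ and set
\begin{equation*}
h(x):=g^{n}\bigl(h_{0}(y)\bigr),
\end{equation*}
together with $h(\overline{x}_{F}):=x_{F}$. Since $f^{n}|_{E_{0}}:E_{0}\to E_{n}$ and $g^{n}|_{D_{0}}:D_{0}\to D_{n}$ are increasing homeomorphisms, each restriction $h|_{E_{n}}$ is an order-preserving homeomorphism onto $D_{n}$. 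At the shared endpoint $f^{n}(a)$ the two candidate definitions coincide because $h_{0}(f(a))=g(b)=g(h_{0}(a))$, so $h$ is a well-defined strictly increasing bijection $[a,\overline{x}_{F})\to[b,x_{F})$, continuous on each $E_{n}$ and across their common endpoints.

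The main obstacle I anticipate is continuity (and hence the homeomorphism property) at $\overline{x}_{F}$. Here one uses the shrinking of the fundamental domains: given $x_{k}\to\overline{x}_{F}^{\,-}$, for every $N$ one has $x_{k}\in E_{n}$ with $n\ge N$ eventually, so $h(x_{k})\in D_{n}\subset[g^{N}(b),x_{F})$; letting $N\to\infty$ forces $h(x_{k})\to x_{F}=h(\overline{x}_{F})$. Combined with monotonicity this upgrades $h$ to a homeomorphism $J\to I$. The conjugacy identity $g\circ h=h\circ f$ holds on $[a,\overline{x}_{F})$ directly from the construction (applying $g$ sends $E_{n}$-data to $E_{n+1}$-data and matches the shifted formula) and at $\overline{x}_{F}$ trivially since $f(\overline{x}_{F})=\overline{x}_{F}$ and $g(x_{F})=x_{F}$. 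The non-uniqueness mentioned in the statement is transparent from the freedom in the choice of $h_{0}$.
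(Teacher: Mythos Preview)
Your proof is correct and follows essentially the same fundamental-domain construction as the paper: pick an arbitrary increasing homeomorphism $h_{0}$ between the fundamental domains $[a,f(a)]$ and $[b,g(b)]$, propagate it by the rule $h|_{E_{n}}=g^{n}\circ h_{0}\circ f^{-n}$, and extend by continuity to the fixed point using that the orbits $f^{n}(a)\to\overline{x}_{F}$ and $g^{n}(b)\to x_{F}$. Apart from swapped notation (your $D_{n},E_{n}$ are the paper's $D_{n}',D_{n}$) and your somewhat more explicit treatment of continuity at $\overline{x}_{F}$, the arguments coincide.
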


\begin{proof}
First of all, we note that both $x_{F}$ and $\overline{x}_{F}$ are
semi-attracting from the left. Observe that, thanks to hypotheses 1 and 2,
$f(J)\subseteq J$ and $g(I)\subseteq I$. In fact, $f(J)=[f(a),\overline{x}%
_{F}]\subseteq\left[  a,\overline{x}_{F}\right]  =J$ and $g(I)=\left[
g(b),x_{F}\right]  \subseteq\left[  b,x_{F}\right]  =I$, since $f(a)>a$ and
$g(b)>b$ by the hypotheses. In view of this, the homeomorphism to be defined
will have the domain $J$ and the image $I$ and in this way it can be
restricted to $f(J)$ and $g(I)$

%The proof is similar to the first part of Theorem \ref{T1}.
Consider $a$, the leftmost point of $J$.
%Since $f\left(  x\right)  >x$, we have $f\left(a\right)  >a$.
We will construct the homeomorphism $h$ subject to the condition $h\left(
a\right)  =b$. In fact, this arbitrary choice allows us to deduce that there
exist infinitely many different topological conjugacies for the same functions
$g$ and $f$. Now, we can consider (since there are infinity of them) an
increasing homeomorphism $h_{0}$ in the domain $D_{0}=\left[  a,f\left(
a\right)  \right]  $ (a \emph{fundamental domain} in \cite{Melo})%
\[
h_{0}:\left[  a,f\left(  a\right)  \right]  \longrightarrow\left[  b,g\left(
b\right)  \right]  ,
\]
such that $h_{0}\left(  a\right)  =b$ and $h_{0}\left(  f\left(  a\right)
\right)  =g\left(  b\right)  $\footnote{For instance,
\[
h_{0}\left(  x\right)  =b+\left(  x-a\right)  \frac{g\left(  b\right)
-b}{f\left(  a\right)  -a},\text{ }x\in D_{0}%
\]
works perfectly to start the process. Any other continuous map with the same
properties works just fine.}.

Note that we have chosen arbitrarily the homeomorphism $h_{0}$ subject to the
condition
\[
g\left(  h_{0}\left(  a\right)  \right)  =h_{0}\left(  f\left(  a\right)
\right)  .
\]
We now use $h_{0}$ to construct a topological conjugacy between $g$ and $f$.
The main ingredient of the proof is the non-locality of this construction
process.
%as we see in figure \ref{FIG1}.

We have started the construction of a topological equivalence by the
restriction of $h$ to the fundamental domain which is $h_{0}$. Consider the
points $x$ in the fundamental domain $D_{0}=\left[  a,f\left(  a\right)
\right]  $, the right side of the conjugacy equation (\ref{COnjug}), i.e.,
$h\circ f\left(  x\right)  $, acts on $D_{0}$. Obviously $f\left(
D_{0}\right)  =\left[  f\left(  a\right)  ,f^{2}\left(  a\right)  \right]
=D_{1}$. In order to compute directly $h\left(  x\right)  $ when $x\in D_{1}$,
we use the left hand side of the conjugacy equation and define $h_{1}\left(
x\right)  $ when $x\in D_{1}$, i.e., the restriction of $h$ to the interval
$D_{1}$. The left side of the conjugacy equation when $x\in D_{0}$ is well
defined, it is $g\circ h_{0}\left(  x\right)  $. We obtain a definition of
$h_{1}\left(  x\right)  $, i.e., the restriction of $h\left(  x\right)  $ to
the interval $D_{1}$, forcing the diagram to be commutative, that is,%
\[
h_{1}\circ f\left(  x\right)  =g\circ h_{0}\left(  x\right)  \text{, }x\in
D_{0}%
\]
or%
\[
h_{1}\left(  x\right)  =\left(  g\circ h_{0}\circ f^{-1}\right)  \left(
x\right)  \text{, }x\in D_{1}\text{.}%
\]
In such a way, we can define the sequence of intervals%
\[
D_{n}=\left[  f^{n}\left(  a\right)  ,f^{n+1}\left(  a\right)  \right]
=f^{n}\left(  D_{0}\right)  \text{,}%
\]
where $f^{n}$ stands for the $n$-th composition of $f$ with itself, and it is
possible to extend the definition of successive restrictions $h_{n}$ of $h$ to
the intervals $D_{n}$, using the same procedure. Thus, for $n=2$, we get%
\[
h_{2}\left(  x\right)  =\left(  g\circ h_{1}\circ f^{-1}\right)  \left(
x\right)  \text{, }x\in D_{2}%
\]
or%
\[
h_{2}\left(  x\right)  =\left(  g^{2}\circ h_{0}\circ f^{-2}\right)  \left(
x\right)  \text{, }x\in D_{2}\text{,}%
\]
where $f^{-2}$ stands for $f^{-1}\circ f^{-1}$. In general%
\[
h_{n}\left(  x\right)  =\left(  g^{n}\circ h_{0}\circ f^{-n}\right)  \left(
x\right)  \text{, }x\in D_{n}\text{.}%
\]
This construction is well done. Effectively, let $x\in D_{j}$ with $0<j<n$,
then
\begin{align*}
g\circ h_{j}\left(  x\right)   &  =h_{j+1}\circ f\left(  x\right) \\
g\circ(g^{j}\circ h_{0}\circ f^{-j})\left(  x\right)   &  =\left(
g^{j+1}\circ h_{0}\circ f^{-j-1}\right)  \circ f\left(  x\right) \\
g^{j+1}\circ h_{0}\circ f^{-j}\left(  x\right)   &  =g^{j+1}\circ h_{0}\circ
f^{-j}\left(  x\right)  \text{.}%
\end{align*}
Consider now the image domains%
\[
D_{n}^{\prime}=\left[  g^{n}\left(  b\right)  ,g^{n+1}\left(  b\right)
\right]
\]
and the homeomorphisms
\[
h_{n}: D_{n}\longrightarrow D_{n}^{\prime},\quad\text{ }h_{n}(x)=\left(
g^{n}\circ h_{0}\circ f^{-n}\right)  (x),
\]
which are increasing in each of the domains, since they are the composition of
increasing functions.

At this point, observe that the union of the intervals $D_{n}=\left[
f^{n}\left(  a\right)  ,f^{n+1}\left(  a\right)  \right]  $ is%
\[%
%TCIMACRO{\dbigcup \limits_{n=0}^{\infty}}%
%BeginExpansion
{\displaystyle\bigcup\limits_{n=0}^{\infty}}
%EndExpansion
D_{n}=\left[  a,\overline{x}_{F}\right)  ,
\]
while the union of the intervals $D_{n}^{\prime}=\left[  g^{n}\left(
b\right)  ,g^{n+1}\left(  b\right)  \right]  $ is%
\[%
%TCIMACRO{\dbigcup \limits_{n=0}^{\infty}}%
%BeginExpansion
{\displaystyle\bigcup\limits_{n=0}^{\infty}}
%EndExpansion
D_{n}^{\prime}=\left[  b,x_{F}\right)  ,
\]
This is true, since the sequence $(f^{n}(a))_{n\in\mathbb{N}}$ is increasing
and bounded by $\overline{x}_{F}$. Therefore, it has a limit, namely $L$. But,
due to the continuity of the $f$,
\[
f(L)=f\left(  \lim_{n\rightarrow\infty}f^{n}(a)\right)  =\lim_{n\rightarrow
\infty}f^{n+1}(a)=L
\]
Hence, the limit of the sequence is a fixed point of $f$ and, by hypothesis 4,
the unique fixed point in the interval $[a,\overline{x}_{F}]$ is precisely
$\overline{x}_{F}$. A similar reasoning can be used for the case of $g$ and
the union of the intervals $D_{n}^{\prime}$.

Then, we can consider the piecewise function $h^{\circ}$ defined as
\[
h^{\circ}\left(  x\right)  =\left\{
\begin{array}
[c]{ccccc}%
h_{0}\left(  x\right)  & \text{:} & \left[  a,f\left(  a\right)  \right]  &
\rightarrow & \left[  b,g\left(  b\right)  \right]  \bigskip\\
h_{1}\left(  x\right)  & \text{:} & \left[  f\left(  a\right)  ,f^{2}\left(
a\right)  \right]  & \rightarrow & \left[  g\left(  b\right)  ,g^{2}\left(
b\right)  \right]  \bigskip\\
\cdots &  &  &  & \bigskip\\
h_{n}\left(  x\right)  & \text{:} & \left[  f^{n}\left(  a\right)
,f^{n+1}\left(  a\right)  \right]  & \rightarrow & \left[  g^{n}\left(
b\right)  ,g^{n+1}\left(  b\right)  \right]  \bigskip\\
\cdots &  &  &  & .
\end{array}
\right.
\]
The function $h^{\circ}$ is a (increasing) homeomorphism piecewise defined in
$\left[  a,\overline{x}_{F}\right)  $ which is constituted by increasing
homeomorphisms joined together. Finally, we extend $h^{\circ}$ by continuity
to a new homeomorphism, our desired $h$, such that%
\begin{align*}
h\left(  x\right)   &  =h^{\circ}\left(  x\right)  \text{, for }x\in\left[
a,\overline{x}_{F}\right) \\
h\left(  \overline{x}_{F}\right)   &  =x_{F}\text{.}%
\end{align*}
The homeomorphism $h$ defined in $\left[  a,\overline{x}_{F}\right]  $ is the
required conjugation.
\end{proof}

\vspace*{0.5cm}

\begin{corollary}
\label{XLStable}Let $g:I\longrightarrow g(I)$ and $f:J\longrightarrow f(J)$
two increasing homeomorphisms with domains $I=\left[  x_{F},b\right]  $ and
$J=\left[  \overline{x}_{F},a\right]  $. Suppose that they verify the
following conditions:

\begin{enumerate}
\item $g\left(  x\right)  <x$, for all $x\in\left(  x_{F},b\right]  $.

\item $f\left(  x\right)  <x$, for all $x\in\left(  \overline{x}_{F},a\right]
$.

\item $x_{F}$ is the (unique) fixed point of $g$ and $\overline{x}_{F}$ is the
(unique) fixed point of $f$.
\end{enumerate}

Then, there exists a topological conjugacy between $g$ and $f$, i.e.,
%an increasing
a homeomorphism $h:J\longrightarrow I$, not necessarily unique, such that%
\[
g\circ h\left(  x\right)  =h\circ f\left(  x\right)  .
\]

\end{corollary}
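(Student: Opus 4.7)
The cleanest plan is to reduce the corollary to Theorem \ref{XRStable} by a reflection, rather than to repeat the fundamental-domain construction from scratch. The two statements are mirror images of each other across the diagonal: in the theorem the unique fixed point sits at the \emph{right} endpoint of the interval and the dynamics push points to the right (i.e., $g(x)>x$), while in the corollary the fixed point sits at the \emph{left} endpoint and the dynamics push points to the left (i.e., $g(x)<x$). A sign change should interchange these two pictures.

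Concretely, I would define the reflected maps
\[
\tilde g(y) = -g(-y), \qquad \tilde f(y) = -f(-y),
\]
on the intervals $\tilde I = [-b,-x_F]$ and $\tilde J = [-a,-\overline x_F]$. Since $g$ and $f$ are increasing homeomorphisms, so are $\tilde g$ and $\tilde f$. The unique fixed points of $\tilde g$ and $\tilde f$ are $-x_F$ and $-\overline x_F$, now located at the right endpoints. Moreover, for $y\in[-b,-x_F)$ one has $-y\in(x_F,b]$, so by hypothesis 1 of the corollary $g(-y)<-y$, which gives $\tilde g(y)=-g(-y)>y$; similarly $\tilde f(y)>y$ for $y\in[-a,-\overline x_F)$. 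Hence $\tilde g$ and $\tilde f$ satisfy exactly the hypotheses of Theorem \ref{XRStable}.

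Applying that theorem produces a homeomorphism $\tilde h:\tilde J\longrightarrow\tilde I$ with $\tilde g\circ\tilde h=\tilde h\circ\tilde f$. I then pull back by the reflection and define
\[
h:J\longrightarrow I, \qquad h(x) = -\tilde h(-x).
\]
A direct check shows $h$ is an increasing homeomorphism and that, using $g(z)=-\tilde g(-z)$ together with $-f(x)=\tilde f(-x)$, one obtains
\[
g\circ h(x) = -\tilde g(\tilde h(-x)) = -\tilde h(\tilde f(-x)) = h\circ f(x),
\]
as required.

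There is essentially no substantive obstacle here; the only thing to be careful about is bookkeeping with the signs, in particular verifying that the reflected maps really do have their fixed points at the right endpoint and satisfy $\tilde g(y)>y$, $\tilde f(y)>y$ on the appropriate half-open intervals, so that Theorem \ref{XRStable} applies verbatim. If one preferred a direct proof instead, one would simply reproduce the fundamental-domain construction of the theorem with the forward-iterate sequences $(f^n(a))$ and $(g^n(b))$, which now decrease monotonically toward $\overline x_F$ and $x_F$ respectively, and set $h_n=g^n\circ h_0\circ f^{-n}$ on $D_n=[f^{n+1}(a),f^n(a)]$; the convergence and continuous-extension arguments are identical.
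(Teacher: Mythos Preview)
Your proof is correct. The reflection argument is sound: the maps $\tilde g(y)=-g(-y)$ and $\tilde f(y)=-f(-y)$ are indeed increasing homeomorphisms on $[-b,-x_F]$ and $[-a,-\overline x_F]$ satisfying the hypotheses of Theorem~\ref{XRStable}, and the pullback $h(x)=-\tilde h(-x)$ gives the desired conjugacy; your sign computations are accurate.

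The paper takes the route you mention only at the end as an alternative: it simply re-runs the fundamental-domain construction of Theorem~\ref{XRStable} directly, noting that the only change is that the intervals are now $D_n=[f^{n+1}(a),f^n(a)]$ and $D_n'=[g^{n+1}(b),g^n(b)]$ (decreasing rather than increasing endpoints), with the same formula $h_n=g^n\circ h_0\circ f^{-n}$. Your reflection trick is arguably cleaner, since it reduces the corollary to the theorem with no new verification needed beyond a few sign checks, whereas the paper's approach implicitly asks the reader to re-examine the convergence and gluing arguments in the mirrored setting. The two arguments are of course equivalent in content---your reflection is exactly what transports one fundamental-domain picture to the other---but yours makes the dependence on Theorem~\ref{XRStable} explicit rather than analogical.
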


\begin{proof}
We note that both $x_{F}$ and $\overline{x}_{F}$ are semi-attracting from the
right. The proof is similar to the one of Theorem \ref{XRStable}. The unique
difference is that, in this case, due to the hypotheses 1 and 2, now the
domains are
\[
D_{n}=\left[  f^{n+1}\left(  a\right)  ,f^{n}\left(  a\right)  \right]  ,
\]%
\[
D_{n}^{\prime}=\left[  g^{n+1}\left(  b\right)  ,g^{n}\left(  b\right)
\right]
\]
and the homeomorphisms are
\[
h_{n}:D_{n}\longrightarrow D_{n}^{\prime},\quad\text{ }h_{n}\left(  x\right)
=\left(  g^{n}\circ h_{0}\circ f^{-n}\right)  \left(  x\right)
\]

\end{proof}

\vspace*{0.5cm}

\begin{corollary}
\label{XRuns} Let $g:I\longrightarrow g(I)$ and $f:J\longrightarrow f(J)$ two
increasing homeomorphisms with domains $I=\left[  b,x_{F}\right]  $ and
$J=\left[  a,\overline{x}_{F}\right]  $. Suppose that they verify the
following conditions:

\begin{enumerate}
\item $g\left(  x\right)  <x$, for all $x\in\left[  b,x_{F}\right)  $.

\item $f\left(  x\right)  <x$, for all $x\in\left[  a,\overline{x}_{F}\right)
$.

\item $x_{F}$ is the (unique) fixed point of $g$ and $\overline{x}_{F}$ is the
(unique) fixed point of $f$.
\end{enumerate}

Then, there exists a topological conjugacy between $g$ and $f$, i.e.,
%an increasing
a homeomorphism $h:J\longrightarrow I$, not necessarily unique, such that%
\[
g\circ h\left(  x\right)  =h\circ f\left(  x\right)  .
\]

\end{corollary}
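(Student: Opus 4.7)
My plan is to carry out the same fundamental-domain construction as in Theorem \ref{XRStable}, but with the roles of $f$ and $g$ replaced by their inverses $f^{-1}$ and $g^{-1}$ when building the partition of the domain. Equivalently, one may apply Theorem \ref{XRStable} directly to $f^{-1}$ on $[f(a),\bar{x}_F]$ and $g^{-1}$ on $[g(b),x_F]$ and then restrict the resulting conjugacy to $J$. The reason this inversion is natural is that the hypotheses $f(x)<x$ on $[a,\bar{x}_F)$ and $g(x)<x$ on $[b,x_F)$ force $f(a)<a$ and $g(b)<b$, so the forward iterates of the left endpoints leave $J$ and $I$; by contrast, the backward iterates $f^{-n}(a)$ and $g^{-n}(b)$ stay inside $J$ and $I$ as monotone increasing sequences, bounded above by $\bar{x}_F$ and $x_F$, and hence convergent, their limits being fixed points of $f$ and $g$ by continuity and thus $\bar{x}_F$ and $x_F$ by the uniqueness hypothesis.

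Concretely, I would take
\[
D_n=[f^{-n}(a),f^{-(n+1)}(a)],\qquad D_n'=[g^{-n}(b),g^{-(n+1)}(b)],
\]
pick any increasing homeomorphism $h_0:D_0\to D_0'$ with $h_0(a)=b$ and $h_0(f^{-1}(a))=g^{-1}(b)$, and define inductively
\[
h_n=g^{-n}\circ h_0\circ f^n:D_n\longrightarrow D_n'.
\]
The computations showing that consecutive pieces agree at the common endpoint $f^{-n}(a)$ and that the conjugacy relation $h_{n+1}\circ f=g\circ h_n$ holds on $D_{n+1}$ are formally identical to those in the proof of Theorem \ref{XRStable}; no new ideas are needed. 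The pieces glue to an increasing homeomorphism $h^{\circ}:[a,\bar{x}_F)\to[b,x_F)$.

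The only step deserving separate attention is the extension by continuity at the endpoint: setting $h(\bar{x}_F)=x_F$, one checks that $h$ remains continuous, and hence a homeomorphism, at $\bar{x}_F$. This is the same argument as in Theorem \ref{XRStable}, based on the identities $\bigcup_n D_n=[a,\bar{x}_F)$ and $\bigcup_n D_n'=[b,x_F)$ together with the monotonicity of $h^{\circ}$. With the extension in place, the conjugacy equation $h\circ f=g\circ h$ holds throughout $[a,\bar{x}_F]$. The main (and minor) obstacle is thus just to recognize that one must build the tower of fundamental domains using $f^{-1}$ and $g^{-1}$ rather than $f$ and $g$; once this is observed, the rest is a direct transcription of the earlier argument.
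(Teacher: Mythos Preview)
Your approach is correct and is essentially the paper's own: both observe that $f^{-1}(x)>x$ and $g^{-1}(x)>x$ on the relevant intervals, so one may apply Theorem~\ref{XRStable} to the inverses $f^{-1}$ and $g^{-1}$ (for which the right endpoints become semi-attracting from the left) and then invert the resulting relation $g^{-1}=h\circ f^{-1}\circ h^{-1}$; your explicit fundamental-domain construction with $h_n=g^{-n}\circ h_0\circ f^{n}$ is exactly what that application of Theorem~\ref{XRStable} produces. One minor slip: on $D_{n+1}$ the correct relation is $h_{n}\circ f=g\circ h_{n+1}$ (your indices are swapped, and as written neither side is defined on $D_{n+1}$), but this does not affect the validity of the argument.
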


\begin{proof}
First of all, we note that both $x_{F}$ and $\overline{x}_{F}$ are both
semi-repelling to the left. The proof is similar to the one of
%second part of Theorem \ref{T1}, in the same sense we used in
Theorem \ref{XRStable}. Consider $a$, the leftmost point of $J$. Since
$f\left(  x\right)  <x$ we have $f^{-1}\left(  x\right)  >x$, so
$f^{-1}\left(  a\right)  >a$. The same happens to $g$, so $g^{-1}\left(
b\right)  >b$. We note that $f^{-1}$ and $g^{-1}$ have the same fixed points
of $f$ and $g$, which are now attracting. The sequences $f^{-n}\left(
a\right)  $ and $g^{-n}\left(  x\right)  $ are increasing and bounded by the
fixed points $\overline{x}_{F}$ and $x_{F}$.

At this point, we can apply Theorem \ref{XRStable} to $f^{-1}$ and $g^{-1}$.
Therefore, $f^{-1}$ and $g^{-1}$ are topologically equivalent%
\[
g^{-1}=h\circ f^{-1}\circ h^{-1} ,
\]
with $h: J \longrightarrow I\text{.}$ Thus inverting the maps in the last
equality, we get
\[
g=h\circ f\circ h^{-1},
\]
as desired.
\end{proof}

\vspace*{0.5cm}

\begin{corollary}
\label{XLuns} Let $g:I\longrightarrow g(I)$ and $f:J\longrightarrow f(J)$ two
increasing homeomorphisms with domains $I=\left[  x_{F},b\right]  $ and
$J=\left[  \overline{x}_{F},a\right]  $. Suppose that they verify the
following conditions

\begin{enumerate}
\item $g\left(  x\right)  >x$, for all $x\in\left(  x_{F},b\right]  $.

\item $f\left(  x\right)  >x$, for all $x\in\left(  \overline{x}_{F},a\right]
$.

\item $x_{F}$ is the (unique) fixed point of $g$ and $\overline{x}_{F}$ is the
(unique) fixed point of $f$.
\end{enumerate}

Then, there exists a topological conjugacy between $g$ and $f$, i.e.,
%an increasing
a homeomorphism $h:J\longrightarrow I$, not necessarily unique, such that%
\[
g\circ h\left(  x\right)  =h\circ f\left(  x\right)  .
\]

\end{corollary}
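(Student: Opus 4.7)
The plan is to mirror the argument of Corollary~\ref{XRuns}, which reduces a semi-repelling situation to a semi-attracting one by passing to inverses. Here the hypotheses say that $x_F$ and $\overline{x}_F$ are semi-repelling to the right (since $g(x)>x$ on $(x_F,b]$ and $f(x)>x$ on $(\overline{x}_F,a]$), whereas Corollary~\ref{XLStable} deals with fixed points that are semi-attracting from the right. So the natural move is to invert $f$ and $g$ and show that the inverses fall inside the scope of Corollary~\ref{XLStable}, then invert the resulting conjugacy.

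More concretely, I would first observe that $g$ increasing with $g(x)>x$ on $(x_F,b]$ forces $g(b)>b$, so $g:[x_F,b]\to[x_F,g(b)]$ is an increasing homeomorphism onto its image, and $g^{-1}$ is well defined on $[x_F,g(b)]$. Because $g$ lies strictly above the diagonal on $(x_F,b]$, its inverse lies strictly below the diagonal: $g^{-1}(x)<x$ for every $x\in(x_F,g(b)]$. The fixed point $x_F$ is preserved and remains unique for $g^{-1}$. The analogous statements hold for $f^{-1}$ on $[\overline{x}_F,f(a)]$. Thus $f^{-1}$ and $g^{-1}$ satisfy exactly the hypotheses of Corollary~\ref{XLStable} on the intervals $[\overline{x}_F,f(a)]$ and $[x_F,g(b)]$.

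Applying Corollary~\ref{XLStable} to the pair $(f^{-1},g^{-1})$ produces an increasing homeomorphism
\[
h:[\overline{x}_F,f(a)]\longrightarrow[x_F,g(b)]
\]
with $g^{-1}\circ h=h\circ f^{-1}$. Composing on the left by $g$ and on the right by $f$ turns this equality into $h\circ f=g\circ h$, which is the desired conjugation relation. Since the construction of Theorem~\ref{XRStable} (and hence of Corollary~\ref{XLStable}) is carried out fundamental-domain by fundamental-domain and the union of the forward $f^{-1}$-orbits of $f(a)$ (respectively $g^{-1}$-orbits of $g(b)$) under the inverses equals the full half-open interval $(\overline{x}_F,a]$ (respectively $(x_F,b]$) by monotone convergence to the unique fixed point, the conjugation $h$ extends to a homeomorphism $J\to I$ with $h(\overline{x}_F)=x_F$.

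The only delicate point is bookkeeping about the domains: Corollary~\ref{XLStable} is stated on the intervals on the right of the fixed point, and the inverses $f^{-1}$, $g^{-1}$ are naturally defined only on the images $[\overline{x}_F,f(a)]$ and $[x_F,g(b)]$, which are strictly larger than $J$ and $I$. I expect this to be the main (minor) obstacle: one has to check that the fundamental-domain construction still exhausts all of $J$ and $I$, not a smaller subinterval, and that extending by continuity at the fixed point does not break the conjugation identity. Both facts follow as in the proof of Theorem~\ref{XRStable}, because the orbits of the endpoints under $f^{-1}$ and $g^{-1}$ converge monotonically to $\overline{x}_F$ and $x_F$ respectively, so no further argument beyond the reduction to Corollary~\ref{XLStable} is needed.
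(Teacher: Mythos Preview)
Your proposal is correct and matches the paper's intended approach. The paper's own proof is a single sentence (``The proof is similar to the ones in the previous results''), and by analogy with Corollary~\ref{XRuns} this is precisely the pass-to-inverses reduction to Corollary~\ref{XLStable} that you spell out; your extra care about the enlarged domains $[\overline{x}_F,f(a)]$ and $[x_F,g(b)]$ is warranted and resolves correctly once one notes that the fundamental-domain construction forces $h(a)=b$, so the restriction $h|_J$ is onto $I$.
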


\begin{proof}
First of all we note that both $x_{F}$ and $\overline{x}_{F}$ are
semi-repelling to the right. The proof is similar to the ones in the previous results.
\end{proof}

\vspace*{0.5cm}

%\begin{remark}
%Actually, theorem \ref{T1} can be seen as a corollary of theorem
%\ref{XRStable} applied directly to the right fixed points (in the intervals
%$\left[  a,\overline{x}_{R}\right]  $ for $f$ and $\left[  b,x_{R}\right]  $
%for $g$) and indirectly, using corollary \ref{XLuns}\ applied to the left
%fixed points (in the intervals $\left[  \overline{x}_{F},a\right]  $ for $f$
%and $\left[  x_{F},b\right]  $ for $g$).
%\end{remark}

%%%%%%%%%%%%%%%%%%%%%%%%%%%%%%%%%%%%%%%%%%%%%%%%%%%%%%%%%%%%%

\subsection{Maps with consecutive pairs of fixed points}

Consider two homeomorphisms $f$ and $g$ with a finite number of isolated fixed
points. In this subsection, we study the construction of a homeomorphism $h$
in intervals between two fixed points of $g$ and $f$. Of course, the obtained
results can be applied between any two consecutive pairs of fixed points of
$g$ and $f$. The exterior of the union of such intervals between two
consecutive fixed points can be treated using the results of the previous subsection.

\begin{theorem}
\label{T1}Let $g:I\longrightarrow I$ and $f:J\longrightarrow J$ two increasing
homeomorphisms defined in the intervals $I=\left[  x_{L},x_{R}\right]  $ and
$J=\left[  \overline{x}_{L},\overline{x}_{R}\right]  $. Suppose that they
verify the following conditions:

\begin{enumerate}
%\item $g\left(  x_{L}\right)  =x_{L}$, $g\left(  x_{R}\right)  =x_{R}$.
%\item $f\left(  \overline{x}_{L}\right)  =\overline{x}_{L}$, $f\left(
%\overline{x}_{R}\right)  =\overline{x}_{R}$.

\item $g\left(  x\right)  >x$, for all $x\in I=\left(  x_{L},x_{R}\right)  $.

\item $f\left(  x\right)  >x$, for all $x\in J=\left(  \overline{x}%
_{L},\overline{x}_{R}\right)  $.

\item The endpoints of the intervals $I, J$ are the unique fixed points of $g$
and $f$ respectively.

%\item $x_{L}$ and $\overline{x}_{L}$ are repelling fixed points.
%\item $x_{R}$ and $\overline{x}_{R}$ are attracting fixed points.

\end{enumerate}

Then, there exists a topological conjugacy between $g$ and $f$, i.e.,
%an increasing
a homeomorphism $h:J\longrightarrow I$, not necessarily unique, such that%
\begin{equation}
g\circ h\left(  x\right)  =h\circ f\left(  x\right)  \label{conjug}%
\end{equation}

\end{theorem}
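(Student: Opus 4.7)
The plan is to rerun the forward-iteration construction of Theorem~\ref{XRStable}, but now iterating in both time directions from a single chosen fundamental domain so as to cover the entire open interior $(\overline{x}_{L},\overline{x}_{R})$. First I would pick interior base points $a\in(\overline{x}_{L},\overline{x}_{R})$ and $b\in(x_{L},x_{R})$ and record the key orbit behavior: since $f(x)>x$ on $(\overline{x}_{L},\overline{x}_{R})$, the forward orbit $(f^{n}(a))_{n\geq 0}$ is strictly increasing and bounded above by $\overline{x}_{R}$, so by monotone convergence its limit is a fixed point of $f$ in $(\overline{x}_{L},\overline{x}_{R}]$, hence equals $\overline{x}_{R}$. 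Applying the same argument to $f^{-1}$ (which satisfies $f^{-1}(x)<x$ on the interior), the backward orbit $(f^{-n}(a))_{n\geq 0}$ decreases to $\overline{x}_{L}$. Consequently the closed intervals $D_{n}=[f^{n}(a),f^{n+1}(a)]$, $n\in\mathbb{Z}$, cover $(\overline{x}_{L},\overline{x}_{R})$ with pairwise disjoint interiors, and analogously $D_{n}'=[g^{n}(b),g^{n+1}(b)]$ cover $(x_{L},x_{R})$.

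Next I would fix any increasing homeomorphism $h_{0}\colon D_{0}\to D_{0}'$ with $h_{0}(a)=b$ and $h_{0}(f(a))=g(b)$ (for instance the affine one), and define
\[
h_{n}=g^{n}\circ h_{0}\circ f^{-n}\colon D_{n}\longrightarrow D_{n}',\qquad n\in\mathbb{Z}.
\]
Each $h_{n}$ is an increasing homeomorphism, and the pieces match at the common endpoint $f^{n+1}(a)$, since both $h_{n}(f^{n+1}(a))$ and $h_{n+1}(f^{n+1}(a))$ telescope to $g^{n+1}(b)$. Gluing produces an increasing homeomorphism $h^{\circ}\colon(\overline{x}_{L},\overline{x}_{R})\to(x_{L},x_{R})$, and the conjugacy on the interior is the same one-line verification as in Theorem~\ref{XRStable}: for $x\in D_{n}$ one has $f(x)\in D_{n+1}$ and
\[
h^{\circ}(f(x))=g^{n+1}\circ h_{0}\circ f^{-(n+1)}(f(x))=g\bigl(g^{n}\circ h_{0}\circ f^{-n}(x)\bigr)=g(h^{\circ}(x)).
\]
Finally I would extend by setting $h(\overline{x}_{L})=x_{L}$ and $h(\overline{x}_{R})=x_{R}$.

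The main (and only real) obstacle is to check that this double extension is continuous at both endpoints. Continuity at $\overline{x}_{R}$ goes exactly as in Theorem~\ref{XRStable}: if $x\to\overline{x}_{R}^{-}$ then the index $n$ of the $D_{n}$ containing $x$ tends to $+\infty$, so $h(x)\in D_{n}'$ is squeezed up to $x_{R}$. The new ingredient is continuity at $\overline{x}_{L}$, handled symmetrically: as $x\to\overline{x}_{L}^{+}$ we have $n\to-\infty$, and the left endpoints $g^{n}(b)$ of the $D_{n}'$ decrease to $x_{L}$ by applying the same monotone convergence argument to $g^{-1}$. As a robustness check, one can obtain the same theorem by splitting at $a$ and applying Theorem~\ref{XRStable} to the pair $(f|_{[a,\overline{x}_{R}]},g|_{[b,x_{R}]})$ and Corollary~\ref{XLuns} to the pair $(f|_{[\overline{x}_{L},a]},g|_{[x_{L},b]})$, using the freedom in both constructions to force $h(a)=b$, and then gluing the two halves at $a$; the resulting $h$ is increasing and conjugates $g$ to $f$ on all of $J$.
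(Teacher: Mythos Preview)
Your proposal is correct and follows essentially the same approach as the paper: pick an interior base point and a fundamental domain $D_{0}=[a,f(a)]$, propagate $h_{0}$ by the formula $h_{n}=g^{n}\circ h_{0}\circ f^{-n}$ in both time directions, glue, and extend to the endpoints by the monotone convergence argument. The only cosmetic difference is that the paper presents the forward ($n\geq 0$) and backward ($n<0$) constructions as two separate steps before joining them, whereas you index by $n\in\mathbb{Z}$ from the start; your closing ``robustness check'' via Theorem~\ref{XRStable} and Corollary~\ref{XLuns} is exactly the alternative the paper records in the remark immediately following the proof.
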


\begin{proof}
First of all, it is easy to check that $x_{L}$ and $\overline{x}_{L}$ are
semi-repelling fixed points to the right while $x_{R}$ and $\overline{x}_{R}$
are semi-attracting fixed points from the left.

To prove the result, we fix one image of $h$ at one particular point%
\[
h\left(  a\right)  =b\text{,}%
\]
where $\overline{x}_{L}<a<\overline{x}_{R}$ and $x_{L}<b<x_{R}$.

Now, we can consider an increasing homeomorphism $h_{0}$ in the domain
$D_{0}=\left[  a,f\left(  a\right)  \right]  $
\[
h_{0}:\left[  a,f\left(  a\right)  \right]  \longrightarrow\left[  b,g\left(
b\right)  \right]  ,
\]
such that $h_{0}\left(  a\right)  =b$ and $h_{0}\left(  f\left(  a\right)
\right)  =g\left(  b\right)  $. Note that we have chosen arbitrarily the
homeomorphism $h_{0}$ subject to the condition $g\left(  h_{0}\left(
a\right)  \right)  =h_{0}\left(  f\left(  a\right)  \right)  $.

%The appearance of $h_{0}$ can be seen at figure
%\ref{FIG1}.%

%This condition implies that $h_{0}$ is increasing since $f\left(  a\right)
%>a$ and $g\left(  b\right)  >b$ because $g$ and $f$ are such that $f\left(
%x\right)  >x$ and $g\left(  x\right)  >x$.%

We use $h_{0}$ to construct a topological conjugacy between $g$ and $f$. The
procedure is divided in two steps: in the first one, we construct the
conjugacy at forward intervals, i.e., right of the fundamental domain; in the
second one, we construct the conjugacy at backward intervals, i.e., left of
the fundamental domain.
%as we see in figure \ref{FIG1}.

We have started the construction of a topological equivalence by the
restriction of $h$ to the fundamental domain which is $h_{0}$. Consider the
points $x$ in the fundamental domain $D_{0}=\left[  a,f\left(  a\right)
\right]  $, the right side of the conjugacy equation (\ref{conjug}), i.e.,
$h\circ f\left(  x\right)  $, acts on $D_{0}$. Obviously $f\left(
D_{0}\right)  =\left[  f\left(  a\right)  ,f^{2}\left(  a\right)  \right]
=D_{1}$. In order to compute directly $h\left(  x\right)  $ when $x\in D_{1}$,
we use the left hand side of the conjugacy equation and define $h_{1}\left(
x\right)  $ when $x\in D_{1}$, i.e., the restriction of $h$ to the interval
$D_{1}$. The left side of the conjugacy equation when $x\in D_{0}$ is well
defined, it is $g\circ h_{0}\left(  x\right)  $. We obtain a definition of
$h_{1}\left(  x\right)  $, i.e., the restriction of $h\left(  x\right)  $ to
the interval $D_{1}$, forcing the diagram to be commutative, that is,
\[
h_{1}\circ f\left(  x\right)  =g\circ h_{0}\left(  x\right)  \text{, }x\in
D_{0}%
\]
or%
\[
h_{1}\left(  x\right)  =\left(  g\circ h_{0}\circ f^{-1}\right)  \left(
x\right)  \text{, }x\in D_{1}\text{.}%
\]
In such a way, we can define the sequence of intervals%
\[
D_{n}=\left[  f^{n}\left(  a\right)  ,f^{n+1}\left(  a\right)  \right]
=f^{n}\left(  D_{0}\right)  \text{,}%
\]
where $f^{n}$ stands for the $n$-th composition of $f$ with itself, and it is
possible to extend the definition of successive restrictions $h_{n}$ of $h$ to
the intervals $D_{n}$, using the same procedure. In general%
\[
h_{n}\left(  x\right)  =\left(  g^{n}\circ h_{0}\circ f^{-n}\right)  \left(
x\right)  \text{, }x\in D_{n}\text{.}%
\]
At this point, observe that the union of the intervals $D_{n}=\left[
f^{n}\left(  a\right)  ,f^{n+1}\left(  a\right)  \right]  $ is%
\[%
%TCIMACRO{\dbigcup \limits_{n=0}^{\infty}}%
%BeginExpansion
{\displaystyle\bigcup\limits_{n=0}^{\infty}}
%EndExpansion
D_{n}=\left[  a,\overline{x}_{R}\right)  ,
\]
%since $\lim_{n\rightarrow\infty}f^{n}\left(  a\right)  =\overline{x}_{R}$,
%because $\overline{x}_{R}$ is an attracting fixed point of $f$.
This is true, since the sequence $(f^{n}(a))_{n\in\mathbb{N}}$ is increasing
and bounded by $\overline{x}_{R}$. Therefore, it has a limit, namely $L$. But,
due to the continuity of the $f$,
\[
f(L)=f\left(  \lim_{n\rightarrow\infty}f^{n}(a)\right)  =\lim_{n\rightarrow
\infty}f^{n+1}(a)=L
\]
Hence, the limit of the sequence is a fixed point of $f$ and, by hypothesis 4,
the unique fixed point in the interval $[a,\overline{x}_{R}]$ is precisely
$\overline{x}_{R}$.

Summarizing, we have the function $h_{\rightarrow}^{\circ}$ piecewise defined
as
\[
h_{\rightarrow}^{\circ}\left(  x\right)  =\left\{
\begin{array}
[c]{ccccc}%
h_{0}\left(  x\right)  & \text{:} & \left[  a,f\left(  a\right)  \right]  &
\rightarrow & \left[  b,g\left(  b\right)  \right]  \bigskip\\
h_{1}\left(  x\right)  & \text{:} & \left[  f\left(  a\right)  ,f^{2}\left(
a\right)  \right]  & \rightarrow & \left[  g\left(  b\right)  ,g^{2}\left(
b\right)  \right]  \bigskip\\
\cdots &  &  &  & \bigskip\\
h_{n}\left(  x\right)  & \text{:} & \left[  f^{n}\left(  a\right)
,f^{n+1}\left(  a\right)  \right]  & \rightarrow & \left[  g^{n}\left(
b\right)  ,g^{n+1}\left(  b\right)  \right]  \bigskip\\
\cdots &  &  &  & \bigskip.
\end{array}
\right.
\]
Of course, $h_{\rightarrow}^{\circ}$ is continuous and increasing in each
interval and agrees at each common extreme point of two successive intervals.
Thus, $h_{\rightarrow}^{\circ}$ is a homeomorphism. Finally, observe that, as
in the case of $f$, the sequence $(g^{n}(a))_{n\in\mathbb{N}}$ is increasing
and bounded by $x_{R}$. Therefore, it has a limit and due to the continuity of
the $g$ and the hypothesis 4, its limit is precisely $x_{R}$. As a
consequence, we can prolong $h_{\rightarrow}^{\circ}$ to $\overline{x}_{R}$ by
continuity defining the complete homeomorphism $h_{\rightarrow}$ in the
compact interval $\left[  a,\overline{x}_{R}\right]  $ making $h_{\rightarrow
}\left(  \overline{x}_{R}\right)  =x_{R}$ and $h_{\rightarrow}\left(
x\right)  =h_{\rightarrow}^{\circ}\left(  x\right)  $ for $x\in\left[
a,\overline{x}_{R}\right)  $, i.e.,%
\[%
\begin{array}
[c]{cccc}%
h_{\rightarrow}: & \left[  a,\overline{x}_{R}\right]  & \longrightarrow &
\left[  a,x_{R}\right]  \text{,}%
\end{array}
\]
Using a similar reasoning, we can construct the topological conjugacy to the
left of the fundamental domain using now $f^{-1\,}$ and $g^{-1}$.

The restriction of $h$ to the fundamental domain is again $h_{0}$. Consider
the points $x$ in the fundamental domain $D_{0}=\left[  a,f\left(  a\right)
\right]  $. Now, we need to define the restrictions $h_{-n}$ of $h$ to the
intervals $D_{-n}=\left[  f^{-n}\left(  a\right)  ,f^{-n+1}\left(  a\right)
\right]  $. Using the same type of arguments of the forward construction we
find that%
\[
h_{-n}\left(  x\right)  =g^{-n}\circ h_{0}\circ f^{n}\left(  x\right)  \text{,
}x\in D_{-n}\text{.}%
\]
Effectively, let $x\in D_{-j}$ with $-n<-j\leq0$, then
%\[
%g\circ h\left(  x\right)  =h\circ f\left(  x\right)  \text{, }x\in
%D_{-j}\text{, is an identity?}%
%\]
%The answer is again yes, since%
\begin{align*}
g\circ h_{-j}\left(  x\right)   &  =h_{-j+1}\circ f\left(  x\right) \\
g\circ(g^{-j}\circ h_{0}\circ f^{j})\left(  x\right)   &  =\left(
g^{-j+1}\circ h_{0}\circ f^{j-1}\right)  \circ f\left(  x\right) \\
g^{-j+1}\circ h_{0}\circ f^{j}\left(  x\right)   &  =g^{-j+1}\circ h_{0}\circ
f^{j}\left(  x\right)  \text{.}%
\end{align*}
The rest of the backward process is just the same as in the forward case,
since now $\overline{x}_{L}$ and $x_{L}$ are attracting fixed points for
$f^{-1}$ and $g^{-1}$.

\medskip

Summarizing again, we can define the piecewise homeomorphism $h_{\leftarrow
}^{\circ}$%
\[
h_{\leftarrow}^{\circ}\left(  x\right)  =\left\{
\begin{array}
[c]{ccccc}%
h_{-1}\left(  x\right)  & \text{:} & \left[  f^{-1}\left(  a\right)  ,a\right]
& \rightarrow & \left[  g^{-1}\left(  b\right)  ,b\right]  \bigskip\\
h_{-2}\left(  x\right)  & \text{:} & \left[  f^{-2}\left(  a\right)
,f^{-1}\left(  a\right)  \right]  & \rightarrow & \left[  g^{-2}\left(
b\right)  ,g^{-1}\left(  b\right)  \right]  \bigskip\\
\cdots &  &  &  & \bigskip\\
h_{-n}\left(  x\right)  & \text{:} & \left[  f^{-n}\left(  a\right)
,f^{-n+1}\left(  a\right)  \right]  & \rightarrow & \left[  g^{-n}\left(
b\right)  ,g^{-n+1}\left(  b\right)  \right]  \bigskip\\
\cdots &  &  &  & .
\end{array}
\right.
\]
%Finally, we make
%\[
%H_{-\infty}=\lim_{n\rightarrow\infty}H_{-n}\left(  x\right)  \text{ for }%
%x\in\left(  \overline{x}_{L},a\right]  \text{,}%
%\]
%which is the restriction of $h\left(  x\right)  $ to the interval $\left(
%\overline{x}_{L},a\right]  $. We complete the definition of $h\left(
%x\right)  $ at $\overline{x}_{L}$ prolonging $H_{-\infty}$ by continuity to
%$\overline{x}_{L}$. Since $\lim_{n\rightarrow\infty}g^{-n}\left(  b\right)
%=x_{L}$, we have $h\left(  \overline{x}_{L}\right)  =x_{L}$.%

The homeomorphism $h_{\leftarrow}^{\circ}\left(  x\right)  $ defined in
$\left(  \overline{x}_{L},a\right]  $ can be prolonged by continuity to
$\overline{x}_{L}$ the same way as $h_{\rightarrow}$.

Therefore, joining the backward homeomorphism $h_{\leftarrow}$ and the forward
one $h_{\rightarrow}$, we obtain the increasing homeomorphism $h$ at the whole
interval $J=\left[  \overline{x}_{L},\overline{x}_{R}\right]  $ and, as
desired, the image of $h$ is $\left[  x_{L},x_{R}\right]  $.
\end{proof}

\begin{remark}
Actually, Theorem \ref{T1} can be seen as a corollary of Theorem
\ref{XRStable} applied directly to the right fixed points (in the intervals
$\left[  a,\overline{x}_{R}\right]  $ for $f$ and $\left[  b,x_{R}\right]  $
for $g$) and indirectly, using Corollary \ref{XLuns}\ applied to the left
fixed points (in the intervals $\left[  \overline{x}_{F},a\right]  $ for $f$
and $\left[  x_{F},b\right]  $ for $g$).
\end{remark}

\vspace*{0.5cm}

Naturally, a similar reasoning gives the same result when both semi-attracting
fixed points are on the left and the semi-repelling fixed points on the right.
Specifically, one can check the following corollary.

\begin{corollary}
\label{Cor1}Let $g:I\longrightarrow I$ and $f:J\longrightarrow J$ two
increasing homeomorphisms defined in the intervals $I=\left[  x_{L}%
,x_{R}\right]  $ and $J=\left[  \overline{x}_{L},\overline{x}_{R}\right]  $.
Suppose that they verify the following conditions:

\begin{enumerate}
%\item $g\left(  x_{L}\right)  =x_{L}$, $g\left(  x_{R}\right)  =x_{R}$.
%\item $f\left(  \overline{x}_{L}\right)  =\overline{x}_{L}$, $f\left(
%\overline{x}_{R}\right)  =\overline{x}_{R}$.

\item $g\left(  x\right)  <x$, for all $x\in I=\left(  x_{L},x_{R}\right)  $.

\item $f\left(  x\right)  <x$, for all $x\in J=\left(  \overline{x}%
_{L},\overline{x}_{R}\right)  $.

\item The endpoints of the intervals $I, J$ are the unique fixed points of $g$
and $f$ respectively.

%\item $x_{L}$ and $\overline{x}_{L}$ are attracting fixed points.
%\item $x_{R}$ and $\overline{x}_{R}$ are repelling fixed points.

\end{enumerate}

Then, there exists a topological conjugacy between $g$ and $f$, i.e.,
%an increasing
a homeomorphism $h:J\longrightarrow I$, not necessarily unique, such that%
\begin{equation}
g\circ h\left(  x\right)  =h\circ f\left(  x\right)  . \label{conjug3}%
\end{equation}

\end{corollary}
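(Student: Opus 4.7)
The cleanest approach is to derive Corollary \ref{Cor1} from Theorem \ref{T1} by passing to inverses, mirroring exactly the strategy used in Corollary \ref{XRuns} (where Theorem \ref{XRStable} was applied to $f^{-1}$ and $g^{-1}$). The plan is to set $\widetilde{f}=f^{-1}:J\longrightarrow J$ and $\widetilde{g}=g^{-1}:I\longrightarrow I$, show that these satisfy the hypotheses of Theorem \ref{T1}, extract the conjugating homeomorphism from that theorem, and then invert the resulting conjugacy equation.

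First I would verify the hypotheses. The inverse of an increasing homeomorphism is an increasing homeomorphism defined on the same interval, and the fixed points of $\widetilde{f}$ and $\widetilde{g}$ coincide with those of $f$ and $g$, so condition 3 of Theorem \ref{T1} is automatic. For the sign conditions, observe that since $g$ is increasing, applying $g^{-1}$ to both sides of $g(x)<x$ preserves the inequality and gives $x<g^{-1}(x)$ on $(x_L,x_R)$; hence $\widetilde{g}(x)>x$ on $(x_L,x_R)$ and, by the same argument, $\widetilde{f}(x)>x$ on $(\overline{x}_L,\overline{x}_R)$. Thus $\widetilde{f}$ and $\widetilde{g}$ meet all the conditions of Theorem \ref{T1}, and the theorem produces a homeomorphism $h:J\longrightarrow I$ with
\[
\widetilde{g}\circ h=h\circ\widetilde{f}, \qquad \text{i.e.,} \qquad g^{-1}\circ h=h\circ f^{-1}.
\]
Composing on the left with $g$ and on the right with $f$ yields $h\circ f=g\circ h$, which is the desired conjugacy equation (\ref{conjug3}).

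Because Theorem \ref{T1} already carries all the weight, there is no serious obstacle; the only thing to check is that the sign condition flips under inversion, which is elementary monotonicity. If a direct proof were preferred, one could mimic the construction of Theorem \ref{T1} step by step: choose an anchor $a\in(\overline{x}_L,\overline{x}_R)$ and a base $b\in(x_L,x_R)$, take the fundamental domain $D_{0}=[f(a),a]$ (now oriented so that $f(a)<a$), define $h_{n}=g^{n}\circ h_{0}\circ f^{-n}$ on the forward intervals $D_{n}=[f^{n+1}(a),f^{n}(a)]$ which monotonically shrink down to $\overline{x}_{L}$, and $h_{-n}=g^{-n}\circ h_{0}\circ f^{n}$ on the backward intervals $D_{-n}=[f^{-n}(a),f^{-n+1}(a)]$ which monotonically expand to $\overline{x}_{R}$; the commutativity verification and the continuous extension at the two fixed endpoints proceed verbatim as in Theorem \ref{T1}.
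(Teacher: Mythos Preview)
Your proposal is correct. The paper's own proof is a single sentence noting that ``the proof is similar to the one in Theorem \ref{T1}'' after identifying the semi-stabilities of the endpoints, so it leaves the mechanism implicit. Your primary route---applying Theorem \ref{T1} to $f^{-1}$ and $g^{-1}$ and then inverting the conjugacy, exactly as Corollary \ref{XRuns} did relative to Theorem \ref{XRStable}---is a clean and rigorous reduction, and your alternative sketch (rebuilding the fundamental-domain construction with $D_0=[f(a),a]$) is precisely what a direct ``similar to Theorem \ref{T1}'' argument would look like. Either reading is compatible with the paper, and the inverse trick arguably buys you the result with less repetition.
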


\begin{proof}
Although in this case $x_{L}$ and $\overline{x}_{L}$ are semi-attracting fixed
points from the right while $x_{R}$ and $\overline{x}_{R}$ are semi-repelling
fixed points to the left, the proof is similar to the one in Theorem \ref{T1}.
\end{proof}

\vspace*{0.5cm}

Actually, the position of semi-attracting and semi-repelling fixed points is
not relevant. The semi-attracting and semi-repelling fixed points can be
opposite for $g$ or $f$. The original map $g$ is still conjugated to $f$ ,
thanks to a reverse order homeomorphism.

\begin{corollary}
\label{decreasinghomeo} Let $g:I\longrightarrow I$ and $f:J\longrightarrow J$
two increasing homeomorphisms defined in the intervals $I=\left[  x_{L}%
,x_{R}\right]  $ and $J=\left[  \overline{x}_{L},\overline{x}_{R}\right]  $.
Suppose that they verify the following conditions:

\begin{enumerate}
%\item $g\left(  x_{L}\right)  =x_{L}$, $g\left(  x_{R}\right)  =x_{R}$.
%\item $f\left(  \overline{x}_{L}\right)  =\overline{x}_{L}$, $f\left(
%\overline{x}_{R}\right)  =\overline{x}_{R}$.

\item $g\left(  x\right)  <x$, for all $x\in I=\left(  x_{L},x_{R}\right)  $.

\item $f\left(  x\right)  >x$, for all $x\in J=\left(  \overline{x}%
_{L},\overline{x}_{R}\right)  $.

\item The endpoints of the intervals $I, J$ are the unique fixed points of $g$
and $f$ respectively.

%\item $x_{L}$ and $\overline{x}_{R}$ are attracting fixed points.
%\item $x_{R}$ and $\overline{x}_{L}$ are repelling fixed points.

\end{enumerate}

Then, there exists a topological conjugacy between $g$ and $f$, i.e.,
%an decreasing
a homeomorphism $h:J\longrightarrow I$, not necessarily unique, such that%
\begin{equation}
g\circ h\left(  x\right)  =h\circ f\left(  x\right)  . \label{conjug4}%
\end{equation}

\end{corollary}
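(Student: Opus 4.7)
The plan is to reduce the statement to Theorem \ref{T1} (or equivalently Corollary \ref{Cor1}) by means of an order-reversing reflection. The reason this is natural is that the stability types of the endpoints are swapped between $f$ and $g$: under $f$ one has $\bar x_L$ semi-repelling to the right and $\bar x_R$ semi-attracting from the left, whereas under $g$ the point $x_L$ is semi-attracting from the right and $x_R$ is semi-repelling to the left. Consequently any conjugacy $h$ we build will necessarily be an order-reversing homeomorphism sending $\bar x_L \mapsto x_R$ and $\bar x_R \mapsto x_L$, so we cannot hope to apply Theorem \ref{T1} directly — we must first flip one of the two maps.

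Concretely, I would introduce the reflection $r:I\to I$ defined by $r(x)=x_L+x_R-x$ and set $\tilde g := r\circ g\circ r^{-1}$. Since $r$ is an order-reversing involution and $g$ is increasing, $\tilde g$ is again an increasing homeomorphism of $I$ onto itself; moreover, for $x\in(x_L,x_R)$ the inequality $g(r(x))<r(x)$ is converted by $r$ into $\tilde g(x)>x$, and the fixed points of $\tilde g$ are exactly $x_L$ and $x_R$. Thus the pair $(f,\tilde g)$ satisfies all three hypotheses of Theorem \ref{T1}, so that theorem supplies an increasing homeomorphism $\tilde h:J\to I$ with $\tilde g\circ\tilde h=\tilde h\circ f$.

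Defining $h:=r\circ\tilde h$, I then verify the conjugacy equation directly:
\[
g\circ h \;=\; g\circ r\circ\tilde h \;=\; r\circ\tilde g\circ\tilde h \;=\; r\circ\tilde h\circ f \;=\; h\circ f,
\]
where the second equality uses $r\circ\tilde g = r\circ r\circ g\circ r^{-1}=g\circ r$. Being the composition of an increasing homeomorphism with an order-reversing involution, $h$ is an order-reversing homeomorphism from $J$ onto $I$ with $h(\bar x_L)=x_R$ and $h(\bar x_R)=x_L$, which is compatible with the stability types on both sides.

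I do not expect any real obstacle here: the only things to check are the sign-flip of the strict inequality under $r$, the identity $g\circ r = r\circ \tilde g$, and the fact that $h$ is a bijective continuous map of intervals (hence a homeomorphism). The small subtlety worth pointing out in the write-up is simply that, in contrast with Theorem \ref{T1} and Corollary \ref{Cor1}, the conjugacy produced here is order-reversing, which is forced by the mismatch of stabilities and justifies the label used for this corollary.
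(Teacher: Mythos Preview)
Your argument is correct. The reflection $r$ conjugates $g$ to an increasing map $\tilde g$ with $\tilde g(x)>x$ on the interior, so Theorem \ref{T1} applies to $(\tilde g,f)$, and composing the resulting conjugacy with $r$ gives exactly the order-reversing homeomorphism required. All the verifications you list (sign flip of the strict inequality, the identity $g\circ r=r\circ\tilde g$, bijectivity and continuity of $h$) are routine.

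The paper proceeds differently: rather than reducing to Theorem \ref{T1}, it reruns the fundamental-domain construction of that theorem from scratch, this time starting from a \emph{decreasing} seed homeomorphism $h_0:[a,f(a)]\to[g(b),b]$ and propagating it by $h_n=g^n\circ h_0\circ f^{-n}$ on $D_n=[f^n(a),f^{n+1}(a)]$ (forward) and $h_{-n}=g^{-n}\circ h_0\circ f^{n}$ on $D_{-n}$ (backward). The point is that each $h_n$ is now decreasing, so the glued map $h$ is a decreasing homeomorphism, but otherwise the argument is verbatim. Interestingly, the very reflection you use appears in the paper only \emph{after} the corollary, in Remark \ref{rem2}, as a consequence (any such $f$ is conjugate to $f^{-1}$ via $x\mapsto \bar x_L+\bar x_R-x$) rather than as the engine of the proof. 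Your route is shorter and more conceptual --- it makes transparent why the conjugacy must reverse orientation and avoids repeating the construction --- while the paper's direct build has the minor advantage of displaying the conjugacy explicitly on each piece $D_n$, in parallel with the other cases.
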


\begin{proof}
First of all note that, in this case, $x_{L}$ and $\overline{x}_{R}$ are
semi-attracting fixed points while $x_{R}$ and $\overline{x}_{L}$ are
semi-repelling fixed points.

The proof is quite similar to the one in Theorem \ref{T1}. The unique
difference is that, in this case, as $g\left(  x\right)  <x$ for all $x\in
I=\left(  x_{L},x_{R}\right)  $, we define a decreasing homeomorphism $h_{0}$
in the domain $D_{0}=\left[  a,f\left(  a\right)  \right]  $
\[
h_{0}:\left[  a,f\left(  a\right)  \right]  \longrightarrow\left[  g\left(
b\right)  ,b\right]  ,
\]
such that $h_{0}\left(  a\right)  =b$ and $h_{0}\left(  f\left(  a\right)
\right)  =g\left(  b\right)  $.

In such a context, the new domains are
\[
D_{n}=\left[  f^{n}\left(  a\right)  ,f^{n+1}\left(  a\right)  \right]  ,
\]%
\[
D_{n}^{\prime}=\left[  g^{n+1}\left(  b\right)  ,g^{n}\left(  b\right)
\right]
\]
and the homeomorphisms are
\[
h_{n}: D_{n}\longrightarrow D_{n}^{\prime}, \quad\text{ }h_{n}\left(
x\right)  =\left(  g^{n}\circ h_{0}\circ f^{-n}\right)  \left(  x\right)  ,
\]
which are decreasing in each of the domains.

The same change has to be considered for the domains
\[
D_{-n}=\left[  f^{-(n+1)}\left(  a\right)  ,f^{-n}\left(  a\right)  \right]
,
\]%
\[
D_{-n}^{\prime}=\left[  g^{-n}\left(  b\right)  ,g^{-(n+1)}\left(  b\right)
\right]
\]
and the homeomorphisms
\[
h_{-n}: D_{-n}\longrightarrow D_{-n}^{\prime} ,\quad\text{ }h_{-n}\left(
x\right)  =\left(  g^{-n}\circ h_{0}\circ f^{n}\right)  \left(  x\right)  ,
\]

With these considerations, similar arguments to those in Theorem \ref{T1} work.
\end{proof}

\vspace*{0.5cm}

%One of the effects of
%reverse order homeomorphisms is to change the sign of the inequality $f\left(
%x\right)  >x$ or $f\left(  x\right)  <x$, we have to rethink hypotheses $4$
%and/or $5$ in case of reversing the stability of a pair or two pairs of fixed points.

\begin{remark}
\label{rem2}As a consequence of the above Corollary \ref{decreasinghomeo}, it
can be deduced that any increasing homeomorphism $f:J\longrightarrow J$ with
two fixed points, $\overline{x}_{L},\overline{x}_{R}$ which are the endpoints
of the interval $J$, is topologically equivalent to its inverse $f^{-1}%
:J\longrightarrow J$. Furthermore, in such a case, $h(x)=\overline{x}%
_{L}+\overline{x}_{R}-x$ is a topological conjugacy between them.
\end{remark}

\subsection{\label{Main}Maps with $n$ fixed points}

The combination of all the results of the previous subsections leaves us in
position to state the following more general result about increasing homeomorphisms.

\begin{theorem}
\label{corollarynfixedpoints} Let $g:I\longrightarrow g(I)$ and
$f:J\longrightarrow f(J)$ two increasing homeomorphisms.
%with domains $I=\left[  x_{F},b\right]  $ and $J=\left[  \overline{x}_{F},a\right]  $.
Then, they are topologically conjugated if and only if they have the same
number of fixed points with the same sequence of stabilities or reversed
sequence of stabilities.
\end{theorem}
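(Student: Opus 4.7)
The plan is to reduce the statement to the one- and two-fixed-point cases already settled in the preceding subsections by partitioning the domain at the fixed points and pasting local conjugacies.

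For the forward direction I argue directly. If $h:J\to I$ is a topological conjugacy with $g\circ h=h\circ f$, then $h$ is a homeomorphism between real intervals, hence either strictly increasing or strictly decreasing; and the conjugacy equation forces $h$ to biject the fixed-point set of $f$ onto that of $g$. Topological stability is invariant under conjugacy, but the left/right semi-attracting labels are swapped by any order-reversing $h$, so the two stability sequences coincide if $h$ is increasing and are reversed if $h$ is decreasing.

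For the reverse direction I treat first the case in which the two stability sequences coincide. Enumerate the fixed points of $f$ in $J$ as $\overline{x}_{1}<\overline{x}_{2}<\cdots$ and those of $g$ in $I$ as $x_{1}<x_{2}<\cdots$; both lists are at most countable and discrete by the standing assumption that fixed points are isolated without accumulation. On each component of $J$ disjoint from the fixed-point set, $f(x)-x$ is of constant sign determined by the stability types at the endpoint(s), and by the matching hypothesis the sign of $g(x)-x$ on the corresponding component of $I$ is the same. On each bounded component $[\overline{x}_{k},\overline{x}_{k+1}]$ I would apply Theorem \ref{T1} (when $f>\mathrm{id}$) or Corollary \ref{Cor1} (when $f<\mathrm{id}$) to produce an increasing local conjugacy $h_{k}:[\overline{x}_{k},\overline{x}_{k+1}]\to[x_{k},x_{k+1}]$; on any tail component extending past the extreme fixed point I would use whichever of Theorem \ref{XRStable} and Corollaries \ref{XLStable}, \ref{XRuns}, \ref{XLuns} matches the side and stability of the tail. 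Each piece by construction sends the endpoint fixed point(s) to the matching fixed point(s) of $g$, so the pieces agree at every common endpoint and concatenate into a strictly increasing homeomorphism $h:J\to I$ with $g\circ h=h\circ f$.

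For the reversed-sequence case I would run the same recipe but build each bounded interior piece via Corollary \ref{decreasinghomeo}, whose hypotheses are precisely that consecutive fixed points carry opposite stability labels for $f$ and for $g$; tail pieces would be obtained from the one-fixed-point results composed with an order-reversing homeomorphism of the appropriate tail. The pieces now paste into a strictly decreasing $h:J\to I$ conjugating $f$ to $g$. The step I expect to require the most care is verifying continuity of the concatenated homeomorphism at fixed points that are cluster points of the partition, which can happen only when the fixed-point set is countably infinite; since each local piece is prescribed to send $\overline{x}_{k}$ to $x_{k}$, however, the one-sided limits automatically agree at every fixed point, so continuity of the glued map is essentially forced.
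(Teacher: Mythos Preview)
Your proposal is correct and follows essentially the same strategy as the paper: partition at the fixed points, apply Theorem~\ref{XRStable} and Corollaries~\ref{XLStable}, \ref{XRuns}, \ref{XLuns} on the tail pieces and Theorem~\ref{T1} or Corollary~\ref{Cor1} on the interior pieces, then glue. You even supply the forward direction explicitly, which the paper's own proof leaves implicit.

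One small difference worth noting: for the reversed-stability case you build piecewise \emph{decreasing} conjugacies via Corollary~\ref{decreasinghomeo} and then paste them. This works, but you must be careful that the decreasing piece on $[\overline{x}_{k},\overline{x}_{k+1}]$ lands on the \emph{opposite-end} interval $[x_{n-k},x_{n+1-k}]$ of $I$, not on $[x_{k},x_{k+1}]$; only then do the hypotheses of Corollary~\ref{decreasinghomeo} match and the pieces concatenate into a global decreasing map. The paper sidesteps this bookkeeping by invoking Remark~\ref{rem2}: pre-compose $g$ with a single order-reversing homeomorphism of $I$, which converts the reversed case into the same-sequence case already handled. Either route is fine. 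Finally, your worry about cluster points is moot under the paper's standing hypothesis that the fixed points have no accumulation points.
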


\begin{proof}
The proof results immediately from a combination and repeated application to
increasing sets of Theorem \ref{XRStable} and Corollaries \ref{XLStable},
\ref{XRuns} and \ref{XLuns} for one fixed point (or intervals lying in the
exterior of the leftmost and rightmost fixed points) and Theorem \ref{T1} and
Corollaries \ref{Cor1} and \ref{decreasinghomeo} for intervals between pairs
of fixed points and observing that two homeomorphisms with zero fixed points
are trivially conjugated.

If the order of the stabilities of the fixed points is reversed from one map
to the other, we use\ reverse homeomorphisms constructed using a reasoning
similar to Remark \ref{rem2} to obtain the conjugacy.
\end{proof}

\begin{example}
Consider three homeomorphisms $\phi$, $\gamma$ and $\zeta$ with three fixed
points each one, $\phi$ and $\gamma$ have the leftmost fixed point attracting,
the middle one semi-repelling to the left and semi-attracting from the right
and the rightmost fixed point repelling, finally $\zeta$ has the leftmost
fixed point repelling, the middle one semi-attracting from the left and
semi-repelling to the right and with the rightmost fixed point attracting.
Accordingly to Theorem \ref{corollarynfixedpoints} the three homeomorphisms
are topologically equivalent.
\end{example}

The general Theorem \ref{corollarynfixedpoints} is enough to prove all the
results of topological conjugacy for bifurcations with eigenvalue $1$ in the
section \ref{Section3}.

\begin{remark}
As a consequence of Theorem \ref{corollarynfixedpoints}, two increasing
homeomorphisms with the same even number of transverse fixed points are
topologically conjugated.
\end{remark}

\begin{remark}
Observe that the Theorem \ref{corollarynfixedpoints}, allows us to deduce that
any increasing homeomorphism with an even number of transverse fixed points is
topologically conjugated to its inverse. Moreover, we can affirm that any
increasing homeomorphism with an odd number of transverse fixed points is not
topologically conjugated to its inverse.
\end{remark}

%%%%%%%%%%%%%%%%%%%%%%%%%%%%%%%%%%%%%%%%%%%%%%%%%%%%%%%%%%%%%

\subsection{Maps with one fixed point and one 2-periodic orbit}

In this section, we deal with conjugacies between decreasing homeomorphisms
associated to the flip bifurcation. For simplicity, we shall consider that $g$
and $f$ have a fixed point\footnote{In the case of decreasing homeomorphisms,
if there exists a unique fixed point, it must be transverse.} at the origin.
%Kuznetsov \cite{Kuznetsov} uses
%the normal form%
%\begin{equation}
%G\left(  x\right)  =-(1+\alpha)x+\epsilon x^{3}\text{,} \label{flip}%
%\end{equation}
%with $\epsilon=\pm1$. When $\epsilon=1$ and $\alpha>0$ is small enough the
%dynamical system
%\[
%x_{n+1}=G\left(  x_{n}\right)  \text{,}%
%\]
%has the repelling fixed point $0$ and the attracting period two orbit
%$\left\{  -\sqrt{\alpha},\sqrt{\alpha}\right\}  $. We want to prove that%
%\begin{equation}
%F\left(  x\right)  =G\left(  x\right)  +O\left(  x^{4}\right)  \text{,}
%\label{nontruncatedflip}%
%\end{equation}
%is topologically conjugated to $G\left(  x\right)  $ for any small $\alpha$
%and $\epsilon=\pm1$. We use a variation of the previous technique to prove the
%existence of the topological conjugacy.
%We consider now general homeomorphisms $g\left(  x\right)  $ and $f\left(
%x\right)  $ subjected to some hypotheses shared by the two normal forms
%$G\left(  x\right)  $ and $F\left(  x\right)  $ in the case of $\epsilon=+1$
%(supercritical), the case $\epsilon=-1$ has reversed symmetry (subcritical)
%and the proofs are similar.
We consider first the topological conjugacy of two homeomorphisms $g$ and $f$
between the two periodic points of each homeomorphism.

\begin{theorem}
\label{T4} Let $g:I\longrightarrow I$ and $f:J\longrightarrow J$ be decreasing
homeomorphisms with domains $I=\left[  x_{L},x_{R}\right]  $ and $J=\left[
\overline{x}_{L},\overline{x}_{R}\right]  $. Suppose that they verify the
following conditions:

\begin{enumerate}
\item $g\left(  x\right)  $ has the repelling fixed point $0$ and an
attracting period two orbit $\left\{  x_{L},x_{R}\right\}  $, with $x_{L}<0$
and $x_{R}>0$, such that $g\left(  x_{L}\right)  =x_{R}$ and $g\left(
x_{R}\right)  =x_{L}$.

\item $f\left(  x\right)  $ has the repelling fixed point $0$ and an
attracting period two orbit $\left\{  \overline{x}_{L},\overline{x}%
_{R}\right\}  $, with $\overline{x}_{L}<0$ and $\overline{x}_{R}>0$, such that
$f\left(  \overline{x}_{L}\right)  =\overline{x}_{R}$ and $f\left(
\overline{x}_{R}\right)  =\overline{x}_{L}$.

\item $g^{2}\left(  x\right)  >x$, for all $x\in\left(  0,x_{R}\right)  $.

\item $f^{2}\left(  x\right)  >x$, for all $x\in\left(  0,\overline{x}%
_{R}\right)  $.
\end{enumerate}

Then, there exists a topological conjugacy between $g$ and $f$, i.e.,
%an increasing
a homeomorphism $h:\left[  \overline{x}_{L},\overline{x}_{R}\right]
\longrightarrow\left[  x_{L},x_{R}\right]  $, not necessarily unique, such
that
\begin{equation}
g\circ h\left(  x\right)  =h\circ f\left(  x\right)  . \label{conjug2}%
\end{equation}

\end{theorem}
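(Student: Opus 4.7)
The plan is to reduce to the already-established Theorem \ref{T1} by squaring. Since $g$ and $f$ are decreasing homeomorphisms, $g^{2}$ and $f^{2}$ are \emph{increasing} homeomorphisms of $I$ and $J$ respectively. By hypotheses 1 and 2, the period-two orbits become pairs of fixed points of the squares, so $g^{2}$ has fixed points $\{x_{L},0,x_{R}\}$ (with the endpoints attracting and $0$ repelling), and likewise $f^{2}$ has $\{\overline{x}_{L},0,\overline{x}_{R}\}$. Hypotheses 3 and 4 give $g^{2}(x)>x$ on $(0,x_{R})$ and $f^{2}(x)>x$ on $(0,\overline{x}_{R})$. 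On the left halves, the attracting character of $x_{L}$ and $\overline{x}_{L}$ for the increasing maps $g^{2},f^{2}$, together with the absence of intermediate fixed points, forces $g^{2}(x)<x$ on $(x_{L},0)$ and $f^{2}(x)<x$ on $(\overline{x}_{L},0)$.

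Next I would apply Theorem \ref{T1} to the restrictions $g^{2}|_{[0,x_{R}]}$ and $f^{2}|_{[0,\overline{x}_{R}]}$ to obtain an increasing homeomorphism
\[
h_{R}\colon [0,\overline{x}_{R}] \longrightarrow [0,x_{R}],\qquad h_{R}(0)=0,\; h_{R}(\overline{x}_{R})=x_{R},
\]
satisfying $g^{2}\circ h_{R}=h_{R}\circ f^{2}$. Rather than invoke Corollary \ref{Cor1} independently on the left half (which would only conjugate the squares, not $g$ and $f$ themselves), I would extend $h_{R}$ to the left half by the \emph{forced} formula dictated by the desired conjugation:
\[
h(x) \;=\; g^{-1}\circ h_{R}\circ f(x), \qquad x\in[\overline{x}_{L},0].
\]
Because $f$ sends $[\overline{x}_{L},0]$ to $[0,\overline{x}_{R}]$, $h_{R}$ sends $[0,\overline{x}_{R}]$ to $[0,x_{R}]$, and $g^{-1}$ sends $[0,x_{R}]$ to $[x_{L},0]$, the image lies in the correct target interval, with $\overline{x}_{L}\mapsto x_{L}$ and $0\mapsto 0$. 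Monotonicity is checked by composition: $f$ and $g^{-1}$ reverse order while $h_{R}$ preserves it, so $h$ is increasing on $[\overline{x}_{L},0]$; continuity at $0$ is immediate from $h(0)=0$ on both sides.

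It remains to verify the conjugation equation $g\circ h=h\circ f$ on the entire interval. On $[\overline{x}_{L},0]$ it holds by construction, since $g\circ h(x)=g\circ g^{-1}\circ h_{R}\circ f(x)=h_{R}\circ f(x)=h\circ f(x)$, using that $f(x)\in[0,\overline{x}_{R}]$ where $h$ agrees with $h_{R}$. On $[0,\overline{x}_{R}]$ one has $f(x)\in[\overline{x}_{L},0]$, so
\[
h\circ f(x) \;=\; g^{-1}\circ h_{R}\circ f^{2}(x) \;=\; g^{-1}\circ g^{2}\circ h_{R}(x) \;=\; g\circ h_{R}(x) \;=\; g\circ h(x),
\]
where the middle equality is the conjugacy between $g^{2}$ and $f^{2}$ provided by Theorem \ref{T1}.

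I do not anticipate a serious obstacle; the only subtle point is the sign-tracking that guarantees $h$ is increasing globally and that the two halves glue continuously at $0$, both of which follow from the decreasing-decreasing-increasing pattern of the composition $g^{-1}\circ h_{R}\circ f$. The conceptual heart of the proof is that the conjugation equation itself dictates how to transport $h_{R}$ across the fixed point $0$, so no independent choice need be made on the left half.
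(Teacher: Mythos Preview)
Your argument is correct and takes a genuinely different route from the paper. The paper carries out a direct fundamental-domain construction adapted to the decreasing case: it fixes a fundamental domain $D_{0}=[a,f^{2}(a)]$ on the positive side, chooses an arbitrary $h_{0}\colon D_{0}\to[b,g^{2}(b)]$, and then propagates by $h_{n}=g^{n}\circ h_{0}\circ f^{-n}$ on $D_{n}=f^{n}(D_{0})$ for all integers $n$, so that even iterates fill out $(0,\overline{x}_{R})$ and odd iterates fill out $(\overline{x}_{L},0)$, with a separate backward pass toward $0$. You instead invoke Theorem~\ref{T1} once for the squares on the right half and then let the conjugacy equation itself transport $h_{R}$ across $0$ via $h=g^{-1}\circ h_{R}\circ f$. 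This is more modular and cleaner; if one unwinds your definition through the proof of Theorem~\ref{T1}, the formulas $g^{2n}\circ h_{0}\circ f^{-2n}$ on the right and $g^{-1}\circ g^{2n}\circ h_{0}\circ f^{-2n}\circ f=g^{2n-1}\circ h_{0}\circ f^{-(2n-1)}$ on the left reproduce exactly the paper's $h_{2n}$ and $h_{2n-1}$, so the two approaches are the same construction seen from different angles. One small remark: your aside that $g^{2}(x)<x$ on $(x_{L},0)$ is true, but the stated justification (``absence of intermediate fixed points'') is not self-contained, since the hypotheses do not directly exclude further period-two orbits; the clean way to see it is to apply the order-reversing $g^{-1}$ to hypothesis~3 at the point $g(x)\in(0,x_{R})$. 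In any case you correctly note that this claim is not used in your actual construction.
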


\begin{proof}
To prove the result, we fix one image of $h$ at one particular point%
\[
h\left(  a\right)  =b\text{,}%
\]
where $0<a<\overline{x}_{R}$ and $0<b<x_{R}$. We will construct the
homeomorphism $h$ subject to the condition $h\left(  a\right)  =b$.

Now, we consider an arbitrary increasing homeomorphism $h_{0}$ from the domain
$D_{0}=\left[  a,f^{2}\left(  a\right)  \right]  $ onto $D_{0}^{\prime
}=\left[  b,g^{2}\left(  b\right)  \right]  $%
\[
h_{0}:\left[  a,f^{2}\left(  a\right)  \right]  \longrightarrow\left[
b,g^{2}\left(  b\right)  \right]  ,
\]
subject to the conditions $h_{0}\left(  a\right)  =b$ and $h_{0}\left(
f^{2}\left(  a\right)  \right)  =g^{2}\left(  b\right)  $. This homeomorphism
is increasing due to hypotheses $3$ and $4$.

This construction allows us to define the homeomorphism $h_{1}\left(
x\right)  $ in the domain%
\[
D_{1}=\left[  f^{3}\left(  a\right)  ,f\left(  a\right)  \right]
\longrightarrow D_{1}^{\prime}=\left[  g\left(  b\right)  ,g^{3}\left(
b\right)  \right]
\]
and to construct step by step the complete topological conjugacy between $g$
and $f$.

Then, the restriction of $h$ to $D_{0}$ is $h_{0}$ and to $D_{1}$ is $h_{1}$.
Consider the points $x$ in the domain $D_{0}=\left[  a,f^{2}\left(  a\right)
\right]  $, the right side of the conjugacy equation, i.e., $h\circ f\left(
x\right)  $, acts on $D_{0}$, obviously $f\left(  D_{0}\right)  =\left[
f^{3}\left(  a\right)  ,f\left(  a\right)  \right]  =D_{1}$.

In order to compute $h\left(  x\right)  $ when $x\in D_{1}$, we use the left
hand side of the conjugacy equation (\ref{conjug2}) to define $h_{1}\left(
x\right)  $ when $x\in D_{1}$, i.e., the restriction of $h$ to the interval
$D_{1}$ using the rule%
\[
h_{1}\left(  x\right)  =\left(  g\circ h_{0}\circ f^{-1}\right)  \left(
x\right)  \text{, }x\in D_{1}.
\]
Once again, this homeomorphism is increasing due to the fact that $f^{-1}$ and
$g$ are decreasing and $h_{0}$ is increasing.

Defining the sequence of intervals%
\begin{align*}
D_{2n}  &  =\left[  f^{2n}\left(  a\right)  ,f^{2n+2}\left(  a\right)
\right]  =f^{2n}\left(  D_{0}\right)  ,\\
D_{2n+1}  &  =\left[  f^{2n+3}\left(  a\right)  ,f^{2n+1}\left(  a\right)
\right]  =f^{2n+1}\left(  D_{0}\right)  ,
\end{align*}
it is possible to extend the definition of successive restrictions of $h$ to
the intervals $D_{n}$, using the same procedure. Thus, for $n=2$, we get%
\[
h_{2}\left(  x\right)  =\left(  g\circ h_{1}\circ f^{-1}\right)  \left(
x\right)  \text{, }x\in D_{2}%
\]
or%
\[
h_{2}\left(  x\right)  =\left(  g^{2}\circ h_{0}\circ f^{-2}\right)  \left(
x\right)  \text{, }x\in D_{2}\text{.}%
\]
In general%
\[
h_{n}\left(  x\right)  =\left(  g^{n}\circ h_{0}\circ f^{-n}\right)  \left(
x\right)  \text{, }x\in D_{n}\text{,}%
\]
which is increasing.

Let us see that this definition works. Let $x\in D_{j}$,
\begin{align*}
g\circ h_{j}\left(  x\right)   &  =h_{j+1}\circ f\left(  x\right) \\
g\circ(g^{j}\circ h_{0}\circ f^{-j})\left(  x\right)   &  =\left(
g^{j+1}\circ h_{0}\circ f^{-j-1}\right)  \circ f\left(  x\right) \\
g^{j+1}\circ h_{0}\circ f^{-j}\left(  x\right)   &  =g^{j+1}\circ h_{0}\circ
f^{-j}\left(  x\right)  \text{.}%
\end{align*}

Following a similar reasoning as in theorems before, one can easily check that
the union of the intervals $D_{2n}=\left[  f^{2n}\left(  a\right)
,f^{2n+2}\left(  a\right)  \right]  $ is%
\[%
%TCIMACRO{\dbigcup \limits_{n=0}^{\infty}}%
%BeginExpansion
{\displaystyle\bigcup\limits_{n=0}^{\infty}}
%EndExpansion
D_{2n}=\left[  a,\overline{x}_{R}\right)  ,
\]
because $\overline{x}_{R}$ is the unique positive fixed point of $f^{2}$ and
the initial condition $a$ is positive. Analogously, the union of the intervals
$D_{2n+1}=\left[  f^{2n+3}\left(  a\right)  ,f^{2n+1}\left(  a\right)
\right]  $ is%
\[%
%TCIMACRO{\dbigcup \limits_{n=0}^{\infty}}%
%BeginExpansion
{\displaystyle\bigcup\limits_{n=0}^{\infty}}
%EndExpansion
D_{2n+1}=\left(  \overline{x}_{L},f\left(  a\right)  \right]  ,
\]
since $\overline{x}_{L}$ is an attracting fixed point of $f^{2}$ and the
initial condition $f\left(  a\right)  $ is negative.

On the other hand, the piecewise function $h_{\rightarrow}^{\circ}$ defined by%
\[
h_{\rightarrow}^{\circ}\left(  x\right)  =\left\{
\begin{array}
[c]{cccc}%
\cdots &  &  & \bigskip\\
h_{2n+1}\left(  x\right)  : & \left[  f^{2n+3}\left(  a\right)  ,f^{2n+1}%
\left(  a\right)  \right]  & \rightarrow & \left[  g^{2n+3}\left(  b\right)
,g^{2n+1}\left(  b\right)  \right]  \bigskip\\
\cdots &  &  & \bigskip\\
h_{3}\left(  x\right)  : & \left[  f^{5}\left(  a\right)  ,f^{3}\left(
a\right)  \right]  & \rightarrow & \left[  g^{5}\left(  b\right)
,g^{3}\left(  b\right)  \right]  \bigskip\\
h_{1}\left(  x\right)  : & \left[  f^{3}\left(  a\right)  ,f\left(  a\right)
\right]  & \rightarrow & \left[  g^{3}\left(  b\right)  ,g\left(  b\right)
\right]  \bigskip\\
h_{0}\left(  x\right)  : & \left[  a,f^{2}\left(  a\right)  \right]  &
\rightarrow & \left[  b,g^{2}\left(  b\right)  \right]  \bigskip\\
h_{2}\left(  x\right)  : & \left[  f^{2}\left(  a\right)  ,f^{4}\left(
a\right)  \right]  & \rightarrow & \left[  g^{2}\left(  b\right)
,g^{4}\left(  b\right)  \right]  \bigskip\\
\cdots &  &  & \bigskip\\
h_{2n}\left(  x\right)  : & \left[  f^{2n}\left(  a\right)  ,f^{2n+2}\left(
a\right)  \right]  & \rightarrow & \left[  g^{2n}\left(  b\right)
,g^{2n+2}\left(  b\right)  \right]  \bigskip\\
\cdots &  &  & ,
\end{array}
\right.
\]
is continuous in each interval and agrees at each extreme point of two
successive intervals. Moreover, $h_{j}=g^{j}\circ h_{0}\circ f^{-j}$ is the
composition of two decreasing functions with an increasing function when $j$
is odd and three increasing functions when $j$ is even. Thus, it is increasing
in $D_{j}$. We can prolong again $h_{\rightarrow}^{\circ}$ by continuity to
$h_{\rightarrow}$ in the closed interval making $h\left(  \overline{x}%
_{L}\right)  =x_{L}$ and $h\left(  \overline{x}_{R}\right)  =x_{R}$. This
implies that $h_{\rightarrow}$ is an increasing homeomorphism in $\left[
\overline{x}_{L},f(a)\right]  $ and in $\left[  a,\overline{x}_{R}\right]  $
%. Finally, we make
%\[
%H_{\infty}\left(  x\right)  =\lim_{n\rightarrow\infty}H_{n}\left(  x\right)
%\text{ for }x\in\left(  \overline{x}_{L},f\left(  a\right)  \right]
%\cup\left[  a,\overline{x}_{R}\right)  \text{,}%
%\]
%which is the restriction of $h\left(  x\right)  $ to the set $\left(
%\overline{x}_{L},f\left(  a\right)  \right]  \cup\left[  a,\overline{x}%
%_{R}\right)  $. We complete the definition of $h\left(  x\right)  $ at the
%points $\overline{x}_{L}$ and $\overline{x}_{R}$ prolonging $H_{\infty}$ by
%continuity to $x_{L}$ and $x_{R}$, since $\lim_{n\rightarrow\infty}%
%g^{2n}\left(  b\right)  =x_{R}=\sqrt{\alpha}$ and $\lim_{n\rightarrow\infty
%}g^{2n+1}\left(  b\right)  =x_{L}=-\sqrt{\alpha}$.
.

The second part of the proof concerns the backward process. Using a similar
reasoning, we construct the topological conjugacy to the pre-images of $D_{0}$.

We remember that the restriction of $h$ to $D_{0}$ is $h_{0}$. Consider the
points $x$ in $D_{0}=\left[  a,f^{2}\left(  a\right)  \right]  $, then we can
define the restrictions $h_{-n}$ of $h$ to the intervals%
\begin{align*}
D_{-2n}  &  =\left[  f^{-2n}\left(  a\right)  ,f^{-2n+2}\left(  a\right)
\right]  =f^{-2n}\left(  D_{0}\right)  ,\\
D_{-2n+1}  &  =\left[  f^{-2n+3}\left(  a\right)  ,f^{-2n+1}\left(  a\right)
\right]  =f^{-2n+1}\left(  D_{0}\right)  ,
\end{align*}
for $n\geq1$.

Using the same type of arguments of the forward construction, we find that%
\[
h_{-n}\left(  x\right)  =g^{-n}\circ h_{0}\circ f^{n}\left(  x\right)  \text{,
}x\in D_{-n}\text{.}%
\]
As in Theorem \ref{T1} this type of construction works. Let $x\in D_{-j}$ with
$-n<-j\leq0$, then%
\begin{align*}
g\circ h_{-j}\left(  x\right)   &  =h_{-j+1}\circ f\left(  x\right) \\
g\circ(g^{-j}\circ h_{0}\circ f^{j})\left(  x\right)   &  =\left(
g^{-j+1}\circ h_{0}\circ f^{j-1}\right)  \circ f\left(  x\right) \\
g^{-j+1}\circ h_{0}\circ f^{j}\left(  x\right)   &  =g^{-j+1}\circ h_{0}\circ
f^{j}\left(  x\right)  \text{.}%
\end{align*}
The rest of the backward process is just the same as in the forward case,
since now $0$ is an attracting fixed point for both $f^{-1}$ and $g^{-1}$. We
define $h_{\leftarrow}^{\circ}$ piecewise%
\[
h_{\leftarrow}^{\circ}\left(  x\right)  =\left\{
\begin{array}
[c]{cccc}%
h_{-1}\left(  x\right)  : & \left[  f\left(  a\right)  ,f^{-1}\left(
a\right)  \right]  & \rightarrow & \left[  g\left(  b\right)  ,g^{-1}\left(
b\right)  \right]  \bigskip\\
h_{-3}\left(  x\right)  : & \left[  f^{-1}\left(  a\right)  ,f^{-3}\left(
a\right)  \right]  & \rightarrow & \left[  g^{-1}\left(  b\right)
,g^{-3}\left(  b\right)  \right]  \bigskip\\
\cdots &  &  & \bigskip\\
h_{_{-2n+1}}\left(  x\right)  : & \left[  f^{^{-2n+3}}\left(  a\right)
,f^{^{-2n+1}}\left(  a\right)  \right]  & \rightarrow & \left[  g^{^{-2n+3}%
}\left(  b\right)  ,g^{^{-2n+1}}\left(  b\right)  \right]  \bigskip\\
\cdots &  &  & \bigskip\\
h_{_{-2n}}\left(  x\right)  : & \left[  f^{^{-2n}}\left(  a\right)
,f^{^{-2n+2}}\left(  a\right)  \right]  & \rightarrow & \left[  g^{^{-2n}%
}\left(  b\right)  ,g^{^{-2n+2}}\left(  b\right)  \right]  \bigskip\\
\cdots &  &  & \bigskip\\
h_{-4}\left(  x\right)  : & \left[  f^{-4}\left(  a\right)  ,f^{-2}\left(
a\right)  \right]  & \rightarrow & \left[  g^{-4}\left(  b\right)
,g^{-2}\left(  b\right)  \right]  \bigskip\\
h_{-2}\left(  x\right)  : & \left[  f^{-2}\left(  a\right)  ,a\right]  &
\rightarrow & \left[  g^{-2}\left(  b\right)  ,b\right]
\end{array}
\right.
\]
Noticing that $\lim_{n\rightarrow\infty}f^{-n}\left(  a\right)  =0$ and
$\lim_{n\rightarrow\infty}g^{-n}\left(  b\right)  =0$, we prolong
$h_{\leftarrow}^{\circ}$ to a homeomorphism $h_{\leftarrow}$\ making
$h_{\leftarrow}\left(  0\right)  =0$ as before.

Now, joining $h_{\leftarrow}$ and $h_{\rightarrow}$ at their corresponding
intervals, we have just constructed $h$ at the whole interval $J=\left[
\overline{x}_{L},\overline{x}_{R}\right]  $, as desired.
\end{proof}

\vspace*{0.5cm}

Now, we have to consider what happens outside the intervals that contain the
periodic points of both $g$ and $f$.

\begin{theorem}
\label{T5} Let $g:I\longrightarrow I$ and $f:J\longrightarrow J$ be decreasing
homeomorphisms with domains $I=\left[  g\left(  b\right)  ,b\right]  $ and
$J=\left[  f\left(  a\right)  ,a\right]  $. Suppose that they verify the conditions:

\begin{enumerate}
\item $g\left(  x\right)  $ has the repelling fixed point $0$ and an
attracting period two orbit $\left\{  x_{L},x_{R}\right\}  $ with $x_{L}<0$
and $x_{R}>0$, such that $g\left(  x_{L}\right)  =x_{R}$ and $g\left(
x_{R}\right)  =x_{L}$. Moreover $b>x_{R}$.

\item $f\left(  x\right)  $ has the repelling fixed point $0$ and an
attracting period two orbit $\left\{  \overline{x}_{L},\overline{x}%
_{R}\right\}  $ with $\overline{x}_{L}<0$ and $\overline{x}_{R}>0$, such that
$f\left(  \overline{x}_{L}\right)  =\overline{x}_{R}$ and $f\left(
\overline{x}_{R}\right)  =\overline{x}_{L}$. Moreover $a>\overline{x}_{R}$.

\item $g^{2}\left(  x\right)  <x$, for all $x\in\left(  x_{R},b\right]  $.

\item $f^{2}\left(  x\right)  <x$, for all $x\in\left(  \overline{x}%
_{R},a\right]  $.
\end{enumerate}

Then, there exists a topological conjugacy between $g$ and $f$, i.e.,
%an increasing
a homeomorphism
\[
h:\left[  f\left(  a\right)  ,\overline{x}_{L}\right]  \cup\left[
\overline{x}_{R},a\right]  \longrightarrow\left[  g\left(  b\right)
,x_{L}\right]  \cup\left[  x_{R},b\right]
\]
not necessarily unique, such that%
\[
g\circ h\left(  x\right)  =h\circ f\left(  x\right)  \text{,}%
\]

\end{theorem}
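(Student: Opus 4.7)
The plan is to reduce this theorem to Corollary~\ref{XLStable} applied to the second iterates $f^2$ and $g^2$, and then propagate the conjugacy to the remaining component via the conjugacy equation itself. The key observation is that $f^2$ and $g^2$ are increasing homeomorphisms (as compositions of two decreasing ones), and when restricted to $[\overline{x}_R, a]$ (resp.\ $[x_R, b]$) they satisfy exactly the hypotheses of Corollary~\ref{XLStable}: a unique fixed point at the left endpoint which is semi-attracting from the right, as witnessed by hypotheses~3 and~4.

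First I would check that $f^2$ maps $[\overline{x}_R, a]$ into itself. Since $f$ is decreasing with $f(\overline{x}_R)=\overline{x}_L$, for any $x\in[\overline{x}_R,a]$ we have $f(x)\in[f(a),\overline{x}_L]$, and applying $f$ once more gives $f^2(x)\in[\overline{x}_R, f^2(a)]\subseteq[\overline{x}_R,a]$ by hypothesis~4. The same hypothesis also guarantees that $\overline{x}_R$ is the unique fixed point of $f^2$ on $[\overline{x}_R,a]$. The analogous verification applies to $g^2$ on $[x_R,b]$. Corollary~\ref{XLStable}, with the free point chosen so that $a\mapsto b$, therefore delivers an increasing homeomorphism
\[
h_{+}:[\overline{x}_R,a]\longrightarrow[x_R,b],\qquad g^2\circ h_{+} = h_{+}\circ f^2,
\]
automatically satisfying $h_{+}(\overline{x}_R)=x_R$ and $h_{+}(a)=b$.

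Next I would extend $h_{+}$ to the full disconnected domain by setting
\[
h(y)=\begin{cases} h_{+}(y) & \text{if } y\in[\overline{x}_R,a],\\ g\circ h_{+}\circ f^{-1}(y) & \text{if } y\in[f(a),\overline{x}_L]. \end{cases}
\]
On $[f(a),\overline{x}_L]$, $f^{-1}$ is a decreasing homeomorphism onto $[\overline{x}_R,a]$, $h_{+}$ is increasing, and $g$ is decreasing, so the composition is increasing and continuous. Evaluating at endpoints yields $h(f(a))=g(h_{+}(a))=g(b)$ and $h(\overline{x}_L)=g(h_{+}(\overline{x}_R))=g(x_R)=x_L$, so $h$ sends $[f(a),\overline{x}_L]$ homeomorphically onto $[g(b),x_L]$; combined with $h_{+}$ this gives a homeomorphism onto the required image.

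Finally I would verify $g\circ h=h\circ f$ on each branch. For $x\in[\overline{x}_R,a]$, we have $f(x)\in[f(a),\overline{x}_L]$, hence $h(f(x))=g(h_{+}(f^{-1}(f(x))))=g(h_{+}(x))=g(h(x))$. For $y\in[f(a),\overline{x}_L]$, we have $f(y)\in[\overline{x}_R,f^2(a)]\subseteq[\overline{x}_R,a]$, so $h(f(y))=h_{+}(f(y))=h_{+}(f^2(f^{-1}(y)))=g^2(h_{+}(f^{-1}(y)))=g(h(y))$, where the penultimate equality uses the conjugacy $g^2\circ h_{+}=h_{+}\circ f^2$ supplied by Corollary~\ref{XLStable}. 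The conceptual content is entirely in the reduction to the squared maps; once $h_{+}$ is in hand, the definition of $h$ on the left component is forced by the conjugacy equation, and the only ``obstacle'' is the bookkeeping of which branch of $h$ applies to $f(x)$ in each case, which is handled cleanly by the fact that $f$ swaps the two components of the domain.
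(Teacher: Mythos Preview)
Your proof is correct, but it takes a different route from the paper's. The paper reconstructs the conjugacy from scratch via the fundamental-domain method: it fixes $h_{0}$ on $D_{0}=[f^{2}(a),a]$, defines $h_{n}=g^{n}\circ h_{0}\circ f^{-n}$ on the alternating intervals $D_{2n}=[f^{2n+2}(a),f^{2n}(a)]$ and $D_{2n+1}=[f^{2n+1}(a),f^{2n+3}(a)]$, glues them, and extends by continuity to $\overline{x}_{L}$ and $\overline{x}_{R}$. You instead invoke Corollary~\ref{XLStable} for the increasing maps $f^{2}$ and $g^{2}$ on the right-hand components to obtain $h_{+}$ in one stroke, and then \emph{define} $h$ on the left component by the only formula the conjugacy equation allows, namely $g\circ h_{+}\circ f^{-1}$. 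Unwinding your construction one sees it reproduces exactly the paper's $h_{2n}$ on the even intervals and $h_{2n+1}$ on the odd ones, so the two proofs build the same object; yours is the more modular packaging, buying brevity by recycling an earlier result, while the paper's direct construction keeps the argument self-contained and makes the interleaving of the two components visually explicit.
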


\begin{proof}
The proof is similar to the proof of Theorem \ref{XRStable}. Consider $a$, the
rightmost point of $\left[  \overline{x}_{R},a\right]  $. Since $f^{2}\left(
x\right)  <x$, we have $f^{2}\left(  a\right)  <a$. Consider the interval
$D_{0}=\left[  f^{2}\left(  a\right)  ,a\right]  $ and any homeomorphism
$h_{0}\left(  x\right)  $ subject to the conditions $h_{0}\left(  a\right)
=b$ and $h_{0}\left(  f^{2}\left(  a\right)  \right)  =g^{2}\left(  b\right)
$. Due to conditions $3$ and $4$ above, $h_{0}$ is increasing since
$f^{2}\left(  a\right)  <a$ and $g^{2}\left(  b\right)  <b$.

Consider now domains
\begin{align*}
D_{2n}  &  =\left[  f^{2n+2}\left(  a\right)  ,f^{2n}\left(  a\right)
\right]  ,\\
D_{2n+1}  &  =\left[  f^{2n+1}\left(  a\right)  ,f^{2n+3}\left(  a\right)
\right]
\end{align*}
and%
\begin{align*}
D_{2n}^{\prime}  &  =\left[  g^{2n+2}\left(  b\right)  ,g^{2n}\left(
b\right)  \right] \\
D_{2n+1}^{\prime}  &  =\left[  g^{2n+1}\left(  b\right)  ,g^{2n+3}\left(
b\right)  \right]
\end{align*}
and the homeomorphisms
\[
h_{n}: D_{n}\longrightarrow D_{n}^{\prime},\quad\text{ } h_{n}\left(
x\right)  =\left(  g^{n}\circ h_{0}\circ f^{-n}\right)  \left(  x\right)  ,
\]
which are increasing in each domain, since they are the composition of
increasing functions (when $n$ is even) or two decreasing functions and an
increasing function (when $n$ is odd).

At this point, we can consider the piecewise function $h^{\circ}$ defined by%
\[
h^{\circ}\left(  x\right)  =\left\{
\begin{array}
[c]{cccc}%
h_{1}\left(  x\right)  : & \left[  f\left(  a\right)  ,f^{3}\left(  a\right)
\right]  & \rightarrow & \left[  g\left(  b\right)  ,g^{3}\left(  b\right)
\right]  \bigskip\\
h_{3}\left(  x\right)  : & \left[  f^{3}\left(  a\right)  ,f^{5}\left(
a\right)  \right]  & \rightarrow & \left[  g^{3}\left(  b\right)
,g^{5}\left(  b\right)  \right]  \bigskip\\
\cdots &  &  & \bigskip\\
h_{_{2n+1}}\left(  x\right)  : & \left[  f^{^{2n+1}}\left(  a\right)
,f^{^{2n+3}}\left(  a\right)  \right]  & \rightarrow & \left[  g^{^{2n+1}%
}\left(  b\right)  ,g^{^{2n+3}}\left(  b\right)  \right]  \bigskip\\
\cdots &  &  & \bigskip\\
h_{_{2n}}\left(  x\right)  : & \left[  f^{^{2n+2}}\left(  a\right)  ,f^{^{2n}%
}\left(  a\right)  \right]  & \rightarrow & \left[  g^{^{2n+2}}\left(
b\right)  ,g^{^{2n}}\left(  b\right)  \right]  \bigskip\\
\cdots &  &  & \bigskip\\
h_{2}\left(  x\right)  : & \left[  f^{4}\left(  a\right)  ,f^{2}\left(
a\right)  \right]  & \rightarrow & \left[  g^{4}\left(  b\right)
,g^{2}\left(  b\right)  \right]  \bigskip\\
h_{0}\left(  x\right)  : & \left[  f^{2}\left(  a\right)  ,a\right]  &
\rightarrow & \left[  g^{2}\left(  b\right)  ,b\right]  ,
\end{array}
\right.
\]
which is continuous. Since
\[
\lim_{n\rightarrow\infty}f^{2n}\left(  a\right)  =\overline{x}_{R}\text{ and
}\lim_{n\rightarrow\infty}g^{2n}\left(  b\right)  =x_{R},
\]
we have
\[
\lim_{x\rightarrow\overline{x}_{R}}h^{\circ}\left(  x\right)  =x_{R},
\]
and since
\[
\lim_{n\rightarrow\infty}f^{2n+1}\left(  a\right)  =\overline{x}_{L}\text{ and
}\lim_{n\rightarrow\infty}g^{2n+1}\left(  b\right)  =x_{L},
\]
we have
\[
\lim_{x\rightarrow\overline{x}_{L}}h^{\circ}\left(  x\right)  =x_{L}.
\]
Prolonging $h^{\circ}$ to $h$ at $\overline{x}_{L}$ and $\overline{x}_{R}$ as
before, we get the topological conjugacy with the desired properties.
\end{proof}

\vspace*{0.5cm}

%When $\alpha<0$ the situation is depicted in the hypotheses:

Now, we are going to consider the case previous to the appearance of the
period two orbit in the bifurcation, that is, with both periodic points
collapsed to the origin.

\begin{theorem}
\label{T6} Let $g:I\longrightarrow I$ and $f:J\longrightarrow J$ be decreasing
homeomorphisms with domains $I=\left[  f\left(  a\right)  ,a\right]  $ and
$J=\left[  g\left(  b\right)  ,b\right]  $. Suppose that they verify the conditions:

\begin{enumerate}
\item $f\left(  x\right)  $ and $g\left(  x\right)  $ have the attracting
fixed point $0$.

\item Consider $b>0$, we have $g^{2}\left(  x\right)  <x$, for all
$x\in\left(  0,b\right]  $.

\item Consider $a>0$, $f^{2}\left(  x\right)  <x$, for all $x\in\left(
0,a\right]  $.
\end{enumerate}

Then, there exists a topological conjugacy between $g$ and $f$, i.e.,
%an increasing
a homeomorphism $h:\left[  f\left(  a\right)  ,a\right]  \longrightarrow
\left[  g\left(  b\right)  ,b\right]  $, not necessarily unique, such that%
\[
g\circ h\left(  x\right)  =h\circ f\left(  x\right)  \text{.}%
\]

\end{theorem}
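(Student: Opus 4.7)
The plan is to reduce this decreasing-map problem to the increasing-map case already handled by Corollary \ref{XLStable}. Since $f$ and $g$ are decreasing homeomorphisms fixing $0$, the iterates $f^{2}$ and $g^{2}$ are increasing homeomorphisms that also fix $0$, and hypotheses 2 and 3 translate directly into the semi-attracting-from-the-right hypotheses of Corollary \ref{XLStable} applied to $f^{2}|_{[0,a]}$ and $g^{2}|_{[0,b]}$. This observation splits the construction into a ``positive half'' on $[0,a]$ produced by the already-proven increasing result, and a ``negative half'' on $[f(a),0]$ that is forced by the conjugacy relation itself.

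Concretely, I would first invoke Corollary \ref{XLStable} to obtain an increasing homeomorphism $h_{+}\colon[0,a]\to[0,b]$ with $h_{+}(0)=0$ satisfying $g^{2}\circ h_{+}=h_{+}\circ f^{2}$. Then I would set $h(x)=h_{+}(x)$ for $x\in[0,a]$ and extend to the negative half by
\[
h(x)=g\bigl(h_{+}(f^{-1}(x))\bigr)\qquad\text{for }x\in[f(a),0].
\]
Since $f^{-1}$ is a decreasing bijection $[f(a),0]\to[0,a]$ and $g$ is a decreasing bijection $[0,b]\to[g(b),0]$, this negative-half formula defines an increasing homeomorphism onto $[g(b),0]$, and it agrees with $h_{+}$ at $0$ because $h_{+}(0)=g(0)=0$. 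Piecing the two halves together gives an increasing homeomorphism $h\colon[f(a),a]\to[g(b),b]$.

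The remaining task is to verify the conjugacy $g\circ h=h\circ f$ on each half. For $x\in[0,a]$, one has $f(x)\in[f(a),0]$, so the negative-half definition immediately yields $h(f(x))=g(h_{+}(f^{-1}(f(x))))=g(h_{+}(x))=g(h(x))$. For $x\in[f(a),0]$, $f(x)\in[0,a]$, and one computes
\[
g(h(x))=g^{2}\bigl(h_{+}(f^{-1}(x))\bigr)=h_{+}\bigl(f^{2}(f^{-1}(x))\bigr)=h_{+}(f(x))=h(f(x)),
\]
where the middle equality is precisely the conjugacy of $f^{2}$ and $g^{2}$. The only delicate point I anticipate is continuity of $h$ at $0$, and this reduces at once to $h_{+}(0)=0=g(0)$ together with continuity of the three composed factors, so no genuinely new difficulty arises beyond what Corollary \ref{XLStable} has already supplied.
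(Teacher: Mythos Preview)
Your argument is correct and constitutes a genuinely different route from the paper's. The paper proves Theorem~\ref{T6} by repeating, essentially verbatim, the fundamental-domain construction of Theorem~\ref{T5}: one fixes $h_{0}$ on $D_{0}=[f^{2}(a),a]$, propagates it to $D_{n}$ via $h_{n}=g^{n}\circ h_{0}\circ f^{-n}$, and then passes to the limit at the collapsed point $0$. Your reduction is more economical: instead of rebuilding the conjugacy from scratch, you observe that $f^{2}$ and $g^{2}$ are increasing with $0$ as their unique (right-semi-attracting) fixed point on $[0,a]$ and $[0,b]$, invoke Corollary~\ref{XLStable} once to get $h_{+}$, and then let the conjugacy equation itself dictate the negative half via $h=g\circ h_{+}\circ f^{-1}$. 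The verification you give is complete; the only facts implicitly used beyond Corollary~\ref{XLStable} are that $h_{+}(a)=b$ (so that $h(f(a))=g(b)$ and $h$ is onto) and $h_{+}(0)=0$, both of which are built into that corollary's construction. What the paper's approach buys is uniformity of method across Theorems~\ref{T4}--\ref{T6}; what yours buys is a clean structural explanation of why the decreasing case is no harder than the increasing one, together with a shorter argument that avoids redoing the limit analysis at~$0$.
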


\begin{proof}
The proof is similar to the one of Theorem \ref{T5}, but with $\overline
{x}_{L}$ and $\overline{x}_{R}$ collapsed to the origin.

\end{proof}

\vspace*{0.5cm}

\begin{remark}
\label{rem}Similar results to Theorems \ref{T4}, \ref{T5} and \ref{T6} can be
obtained for decreasing homeomorphisms $g\left(  x\right)  $ and $f\left(
x\right)  $ that have an attracting fixed point at $0$ and a repelling period
two orbit or only a repelling fixed point.
\end{remark}

%Consider now the equations (\ref{flip}) and (\ref{nontruncatedflip}), with
%$\epsilon=+1$, when $\alpha>0$, we combine theorem \ref{T4} with theorem
%\ref{T5} between fixed points and theorem \ref{T6} outside the fixed points.
%For $\alpha<0$, we use theorem \ref{T4} to prove the topological conjugacy.
%(Work to be done here: we have to deal with $\alpha=0$ or
%\[
%G\left(  x\right)  =-x+\epsilon x^{3}%
%\]
%and
%\[
%F\left(  x\right)  =g\left(  x\right)  +O\left(  x^{4}\right)  \text{,}%
%\]
%that must be analyzed directly, this is in the core of the problem, since the
%point $0$ is nonhyperbolic).

\section{\label{Section3}Solution for the conjugacy problem for the fold,
transcritical, pitchfork and flip bifurcations normal forms}

The results in the previous sections allows us to establish local topological
conjugacies between any two homeomorphisms with the same number of fixed
points (or period-2 points) and with the same stability. These more general
results can be used, in particular, to give a complete proof for the problem
of finding a homeomorphism providing the topological equivalence between any
family verifying specific bifurcation conditions and the corresponding
simplest (truncated) normal form, which remained unpublished until now, as
said in \cite{Kuz95}.

Actually, they can be used in a more general context of appearance of these
bifurcations under generalized conditions (see \cite{BaVa99}), giving
topological equivalences between any family satisfying specific bifurcation
conditions of any order and the simplest (truncated) normal form. In
particular, normal forms of any order for the same bifurcation (see
\cite{BaVa00}) are topologically equivalent. Thus, theorems below give a
solution for this classical problem in bifurcation theory even in a more
general way than originally posed three decades ago in \cite{Arn83}.

Specifically, here we show how to construct homeomorphisms which allows us to
find (local) topological equivalences among one-parameter families undergoing
the same kind of bifurcation.

We have a local topological equivalence of families $f_{\alpha}$ and
$g_{\beta}$ depending on parameters $\alpha$ and $\beta$ whenever
\cite{AAIS94,Kuz95}:

\begin{enumerate}
\item there exists a homeomorphism of the parameter space $p:$ $%
%TCIMACRO{\U{211d} }%
%BeginExpansion
\mathbb{R}
%EndExpansion
$ $\longrightarrow%
%TCIMACRO{\U{211d} }%
%BeginExpansion
\mathbb{R}
%EndExpansion
$, $\beta=p\left(  \alpha\right)  $;

\item there is a parameter-dependent homeomorphism of the phase space
$h_{\alpha}$ : $J\longrightarrow I$, $y=h_{\alpha}\left(  x\right)  $, mapping
orbits of the system $f_{\alpha}$ onto orbits of $g_{\beta}$, i.e.,
$h_{\alpha}\circ f_{\alpha}=g_{\beta}\circ h_{\beta}$, at parameter values
$\beta=p\left(  \alpha\right)  $.
\end{enumerate}

As can be done, we consider the same denomination for the parameter $\mu$ for
the two maps $f_{\mu}$ and $g_{\mu}$ which we want to conjugate. Thus, the
homeomorphism $p$ is the identity, $\alpha=\beta=\mu$ and $h_{\mu}\circ
f_{\mu}=g_{\mu}\circ h_{\mu}$. One does not require the homeomorphism $h_{\mu
}$ to depend continuously on the parameter $\mu$, meaning that the conjugacy
of the families $f_{\mu}$ and $g_{\mu}$ is a weak (or fiber) equivalence (see
\cite{AAIS94} or \cite{Kuz95}).

%Meter aqu\'{\i} quiz\'{a} la definici\'{o}n de parameter families locally topologically equivalent

\begin{theorem}
(Fold Normal Form)\label{Fold Normal Form} Let $f:\mathbb{R}\times
\mathbb{R}\rightarrow\mathbb{R}$ be a one-parameter family of maps verifying
the generalized nondegeneracy conditions of a fold bifurcation \cite{BaVa99}.
%and in the bifurcation value of the parameter both fixed points of $f_{0}$ and
%$g_{0}$ have the same (non-hyperbolic ) topological stability.
Then, $f_{\mu}$ is (locally) topologically equivalent near the origin to one
of the following normal forms
\begin{equation}
g(x,\mu)=x\pm\mu\pm x^{2} \label{FoldSimplestNormalForm}%
\end{equation}

\end{theorem}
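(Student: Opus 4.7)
The plan is to reduce Theorem~\ref{Fold Normal Form} to the general conjugacy machinery of Section~\ref{Section2} by matching, parameter by parameter, the fixed-point structure of $f_\mu$ with that of the polynomial family $g(x,\mu)=x\pm\mu\pm x^{2}$. Recall that for $g(x,\mu)=x+\mu+x^{2}$ the equation $g(x,\mu)=x$ reduces to $x^{2}+\mu=0$, so for $\mu<0$ there are two transverse fixed points $x_{\pm}=\pm\sqrt{-\mu}$ (the left one attracting, the right one repelling), for $\mu=0$ there is a unique mixed-stability fixed point at the origin, and for $\mu>0$ there are none. The other three sign choices permute these scenarios.

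First I would analyze the zero-set of $F(x,\mu):=f(x,\mu)-x$ in a neighborhood of the origin. The generalized fold nondegeneracy conditions of \cite{BaVa99} force $F(0,0)=0$, $\partial_{x}F(0,0)=0$, the transversality condition $\partial_{\mu}F(0,0)\neq 0$, and the first nonzero pure $x$-derivative of even order (so that the graph of $F(\cdot,0)$ touches zero tangentially without changing sign). A Weierstrass preparation / implicit function argument then shows that in a sufficiently small neighborhood $(-\delta,\delta)\times(-\varepsilon,\varepsilon)$ the zero-set of $F$ has exactly the same stratification in $\mu$ as the model $x\pm\mu\pm x^{2}$: either $0$, $1$, or $2$ fixed points, with the single fixed point at $\mu=0$ of mixed stability, and, in the two-fixed-point regime, one attracting and one repelling. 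Fixing the appropriate signs in $g$ matches this picture exactly.

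Once the three regimes are identified, the construction of $h_{\mu}$ reduces to a case analysis covered by the earlier theorems. For values of $\mu$ giving no fixed points, both $f_{\mu}$ and $g_{\mu}$ are increasing homeomorphisms with $f_{\mu}(x)\neq x$ throughout the chosen interval, and two such maps are trivially conjugated (as invoked in the proof of Theorem~\ref{corollarynfixedpoints}). For $\mu=0$, both maps have a unique mixed-stability fixed point; I split the interval at that point and apply Theorem~\ref{XRStable} or Corollary~\ref{XLStable} on the semi-attracting side and Corollary~\ref{XRuns} or Corollary~\ref{XLuns} on the semi-repelling side, gluing the two pieces continuously at the fixed point. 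For $\mu$ in the regime with two transverse fixed points of opposite stability, I invoke Theorem~\ref{T1} (or Corollary~\ref{Cor1}) on the interval between them and the one-fixed-point results on the two exterior intervals, gluing at the endpoints. The resulting fibered family $\{h_{\mu}\}$ together with the identity reparametrisation $p(\mu)=\mu$ is precisely the notion of local (weak/fiber) topological equivalence recalled before the statement, and no continuous dependence of $h_{\mu}$ on $\mu$ is required.

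The main obstacle is the first step: verifying that the higher-order nondegeneracy conditions of \cite{BaVa99}, which are weaker than the classical second-order ones, still produce the same qualitative bifurcation diagram as the quadratic model, namely a double root at $(0,0)$ that splits into an attracting/repelling pair for one sign of $\mu$ and disappears for the other. Once this qualitative stratification is established, the construction of the conjugating homeomorphism reduces to a direct application of Theorem~\ref{corollarynfixedpoints}.
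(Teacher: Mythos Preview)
Your proposal is correct and follows essentially the same route as the paper: split into the three parameter regimes (two transverse fixed points of opposite stability, one mixed-stability fixed point, no fixed points) and invoke the Section~\ref{Section2} conjugacy results in each case. The only difference is cosmetic: the paper calls Theorem~\ref{corollarynfixedpoints} directly in all three regimes and defers the stratification of the fixed-point set entirely to \cite{BaVa99}, whereas you unpack which of Theorem~\ref{XRStable}, Corollaries~\ref{XLStable}--\ref{XLuns}, Theorem~\ref{T1} and Corollary~\ref{Cor1} are actually used and sketch a Weierstrass-preparation justification for the stratification.
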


\begin{proof}
It is sufficient to observe that on one side of $\mu=0$, any element of the
family $f_{\mu}$ has two transverse fixed points, one unstable and one stable
and $g_{\mu}$ has exactly the same number of transverse fixed points one
stable and the other unstable, therefore $f_{\mu}$ and $g_{\mu}$ satisfy the
conditions of Theorem \ref{corollarynfixedpoints}; while in the other side of
$\mu=0$ any system $f_{\mu}$ has no fixed points also satisfying Theorem
\ref{corollarynfixedpoints}.

On the other hand, the maps $f_{0}$ and $g_{0}$ have a unique mixed-stability
fixed point with the same stability, i.e., semi-attracting from one side and
semi-repelling to the other side or vice-versa. Consequently, both maps verify
again Theorem \ref{corollarynfixedpoints} at the bifurcation point.
\end{proof}

\begin{example}
\textrm{The Theorem \ref{Fold Normal Form} allows us to know that every family
of any of the forms
\[
x\pm\mu\pm x^{2}+\text{h.o.t.}%
\]
is locally topologically equivalent to one of the forms in
(\ref{FoldSimplestNormalForm}), so giving a complete proof for the classical
bifurcation problem exactly as posed in \cite{Arn83} or \cite{Kuz95}.
Moreover, it also allows us to know that any higher order normal form of the
fold bifurcation
\[
x\pm\mu\pm x^{2n},\quad n\in\mathbb{N}%
\]
is topologically equivalent to the simplest one given in
(\ref{FoldSimplestNormalForm}) and, even more generally, that any family of
the form
\[
x\pm\mu\pm x^{2n}+h.o.t.,\quad n\in\mathbb{N}%
\]
is also topologically equivalent to one the simplest ones given in
(\ref{FoldSimplestNormalForm}). }
\end{example}

\begin{theorem}
\label{Transcritical}(Transcritical Normal Form) Let $f:\mathbb{R}%
\times\mathbb{R}\rightarrow\mathbb{R}$ be a one-parameter family of maps
verifying the generalized nondegeneracy conditions of a transcritical
bifurcation \cite{BaVa99}.
%and in the bifurcation value of the parameter both
%fixed points of $f_{0}$ and $g_{0}$ have the same (non-hyperbolic )
%topological stability.
Then, $f_{\mu}$ is (locally) topologically equivalent near the origin to one
of the following normal forms
\begin{equation}
g(x,\mu)=x\pm\mu x\pm x^{2} \label{transcriticaltruncated}%
\end{equation}

\end{theorem}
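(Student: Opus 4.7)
My plan is to follow exactly the same strategy as the proof of Theorem \ref{Fold Normal Form}: partition the parameter space into the three regions $\mu<0$, $\mu=0$ and $\mu>0$, and in each region reduce the topological equivalence to Theorem \ref{corollarynfixedpoints} by matching the number of fixed points and their stability pattern between $f_\mu$ and $g_\mu$. The essential difference with the fold case is that here, rather than having fixed points appear on only one side of $\mu=0$, both $f_\mu$ and the normal form $g_\mu$ possess two transverse fixed points on \emph{each} side of $\mu=0$, which collide at the origin when $\mu=0$ and swap stabilities across the bifurcation.

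For $\mu\neq 0$, I will use the generalized nondegeneracy conditions of \cite{BaVa99} to show that, in a sufficiently small neighbourhood of the origin, $f_\mu$ has exactly two transverse fixed points, one attracting and one repelling, with a left-to-right sequence of stabilities determined by the sign of $\mu$ and by the fixed signs in the nondegeneracy conditions. A direct computation on $g_\mu=x\pm\mu x\pm x^{2}$ gives the two fixed points $x=0$ and a second one proportional to $\mu$, both transverse, and the same sign analysis shows that the resulting sequence of stabilities of $g_\mu$ agrees with that of $f_\mu$ (up to reversal of the order, which is permitted by the statement of Theorem \ref{corollarynfixedpoints}). Thus Theorem \ref{corollarynfixedpoints} supplies a conjugating homeomorphism $h_\mu\colon J\to I$ on each side of $\mu=0$.

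For $\mu=0$, the two fixed points of $f_0$ and the two fixed points of $g_0$ collapse into a single fixed point at the origin which, by the nondegeneracy conditions, is of mixed-stability type (semi-attracting on one side and semi-repelling on the other), with the same orientation for $f_0$ and $g_0$. Hence Theorem \ref{corollarynfixedpoints} again applies and produces $h_0$. Finally, putting $h_\mu$ together for every $\mu$ in a small neighbourhood of $0$ yields the local fiberwise equivalence discussed just before the statement, with $p$ the identity on the parameter space.

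The main obstacle of this plan is not the construction of $h_\mu$, which is automatic once the hypotheses of Theorem \ref{corollarynfixedpoints} are verified, but rather the bookkeeping of signs: one has to check carefully that the generalized nondegeneracy conditions of \cite{BaVa99} really do force $f_\mu$ to exhibit the same local picture (number, position and left-to-right stability sequence of fixed points) as the normal form $g_\mu$ on both branches and at the bifurcation value. Once this match is established, the topological equivalence near the origin is an immediate consequence of Theorem \ref{corollarynfixedpoints}.
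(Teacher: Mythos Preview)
Your proposal is correct and follows essentially the same route as the paper: you split into the cases $\mu\neq 0$ (two transverse fixed points of opposite stability for both $f_\mu$ and $g_\mu$) and $\mu=0$ (a single mixed-stability fixed point for each), and then invoke Theorem \ref{corollarynfixedpoints} in each case. The paper's own proof is more terse and does not spell out the sign bookkeeping you highlight, but the strategy and the key appeal to Theorem \ref{corollarynfixedpoints} are identical.
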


\begin{proof}
It is sufficient to observe that on both sides of $\mu=0$, any system $f_{\mu
}$ has two transverse fixed points, which alternate their stabilities and
$g_{\mu}$ has exactly the same number of fixed points one stable and the other
unstable. Hence, when $\mu\not =0$, $f_{\mu}$ and $g_{\mu}$ satisfy the
conditions of Theorem \ref{corollarynfixedpoints}, being topologically conjugated.

On the other hand, for $f_{0}$ and $g_{0}$ there exists for each map a unique
mixed-stability fixed point. So, both maps verify again Theorem
\ref{corollarynfixedpoints}.
\end{proof}

\begin{example}
\textrm{The Theorem \ref{Transcritical} allows us to know that every family of
any the forms
\[
x\pm\mu x\pm x^{2n}+\text{h.o.t.},\quad n\in\mathbb{N}%
\]
is locally topologically equivalent to one of the forms in
(\ref{transcriticaltruncated}). }
\end{example}

\begin{theorem}
\label{Pitchfork}(Pitchfork Normal Form) Let $f:\mathbb{R}\times
\mathbb{R}\rightarrow\mathbb{R}$ be a one-parameter family of maps verifying
the generalized nondegeneracy conditions of a pitchfork bifurcation
\cite{BaVa99}.
%and in the bifurcation value of the parameter both fixed points of $f_{0}$ and
%$g_{0}$ have the same (non-hyperbolic ) topological stability.
Then, $f_{\mu}$ is (locally) topologically equivalent near the origin to one
of the following normal forms
\begin{equation}
g(x,\mu)=x\pm\mu x\pm x^{3} \label{pitchforktruncated}%
\end{equation}

\end{theorem}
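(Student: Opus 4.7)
The plan is to mirror exactly the strategy of the proofs of Theorem \ref{Fold Normal Form} and Theorem \ref{Transcritical}: reduce the statement, at each fixed value of $\mu$, to an application of the general conjugacy result Theorem \ref{corollarynfixedpoints}, by simply counting fixed points and checking that their sequence of stabilities matches between $f_\mu$ and $g_\mu$. Since the conjugacy produced this way is allowed to depend on $\mu$ in a merely fiberwise manner, there is no continuity-in-parameter obstruction to worry about.

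First, I would analyze the normal form $g(x,\mu)=x\pm\mu x\pm x^3$. Writing $g(x,\mu)-x = \pm\mu x \pm x^3 = \pm x(\mu \pm x^2)$, one sees that $0$ is always a fixed point, and that on one side of $\mu=0$ there are no other fixed points (so $f_\mu$ has a unique transverse fixed point whose stability is prescribed by the sign conventions of the generalized nondegeneracy conditions), while on the opposite side of $\mu=0$ two additional fixed points $\pm\sqrt{\mp\mu}$ appear symmetrically, yielding three transverse fixed points whose stabilities alternate along the real line. At $\mu=0$ itself, the map reduces to $x \pm x^3$, which has a unique transverse fixed point at the origin (of the type attracting or repelling according to the sign of the cubic term). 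This is exactly the fixed-point structure forced by the generalized pitchfork nondegeneracy conditions of \cite{BaVa99} on the original family $f_\mu$.

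Next, I would split into the three regimes and invoke Theorem \ref{corollarynfixedpoints} in each of them. For $\mu$ on the side with a single fixed point, $f_\mu$ and $g_\mu$ are two order-preserving homeomorphisms with one transverse fixed point of the same stability type, hence topologically conjugate. For $\mu$ on the side with three fixed points, both $f_\mu$ and $g_\mu$ have three transverse fixed points with the same alternating sequence of stabilities (stable, unstable, stable or the opposite), so Theorem \ref{corollarynfixedpoints} again applies. At the bifurcation value $\mu=0$, both maps have a unique transverse fixed point of the same semi-stability pattern, and Theorem \ref{corollarynfixedpoints} gives the conjugacy once more. Choosing the parameter homeomorphism $p$ to be the identity $\beta=\alpha=\mu$ completes the local topological equivalence of families.

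The main conceptual point, rather than an obstacle, is simply the verification that the generalized pitchfork nondegeneracy conditions of \cite{BaVa99} really do force the above fixed-point count and stability sequence on a punctured neighborhood of $\mu=0$; once that is granted, the proof is a direct instance of Theorem \ref{corollarynfixedpoints} and proceeds along the same lines as Theorem \ref{Fold Normal Form} and Theorem \ref{Transcritical}. No new homeomorphism construction is required, since every needed piece is furnished by the results of Section \ref{Section2}.
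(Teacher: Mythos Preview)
Your proposal is correct and follows essentially the same approach as the paper's own proof: split into the three parameter regimes (one side of $\mu=0$ with three transverse fixed points of alternating stability, the other side with a single transverse fixed point, and the bifurcation value $\mu=0$ with a single transverse fixed point), and in each regime invoke Theorem \ref{corollarynfixedpoints}. Your write-up is somewhat more explicit in analyzing the fixed-point structure of the normal form $g(x,\mu)=x\pm\mu x\pm x^3$, but the argument is otherwise the same as the paper's.
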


\begin{proof}
It is sufficient to observe that on one side of $\mu=0$, any map in the family
$f_{\mu}$ has three transverse fixed points, one middle point $x_{M}$ and two
exterior fixed points $x_{L}$ and $x_{R}$ which have the opposite stability of
$x_{M}$. Then, $f_{\mu}$ and $g_{\mu}$ share the same number of fixed points
with the same stability and we apply Theorem \ref{corollarynfixedpoints}.
While, on the other side of $\mu=0$, any map in the family $f_{\mu}$ has only
one transverse fixed point which is $x_{M}$ with the same stability of $0$ for
$g_{\mu}$. Therefore, both maps verify Theorem \ref{corollarynfixedpoints}.
The same happens for $f_{0}$ and $g_{0}$. For each one of those functions
there exists a unique transverse fixed point with the same stability.
\end{proof}

\begin{example}
\textrm{The Theorem \ref{Pitchfork} allows us to know that every family of any
of the forms
\[
x\pm\mu x\pm x^{2n+1}+\text{h.o.t.},\quad n\in\mathbb{N}%
\]
is locally topologically equivalent to one of the forms in
(\ref{pitchforktruncated}). }
\end{example}

\begin{theorem}
\label{flip} (Flip Normal Form) Let $f:\mathbb{R}\times\mathbb{R}%
\rightarrow\mathbb{R}$ be a one-parameter family of maps verifying the
generalized nondegeneracy conditions of a flip bifurcation \cite{BaVa99}.
%and in the bifurcation value of
%the parameter both fixed points of $f_{0}$ and $g_{0}$ have the same
%(non-hyperbolic ) topological stability.
Then, $f_{\mu}$ is (locally) topologically equivalent near the origin to one
of the following normal forms
\begin{equation}
g(x,\mu)=-x\pm\mu x\pm x^{3} \label{fliptruncated}%
\end{equation}

\end{theorem}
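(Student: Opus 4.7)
The plan is to parallel the proofs of Theorems \ref{Fold Normal Form}, \ref{Transcritical} and \ref{Pitchfork}, but this time invoking the decreasing-homeomorphism machinery of Section \ref{Section2}, namely Theorems \ref{T4}, \ref{T5}, \ref{T6} together with Remark \ref{rem}, since a flip is characterised by an eigenvalue $-1$ so that both $f_\mu$ and $g_\mu$ are orientation-reversing in a neighbourhood of the origin for $\mu$ small.

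First I would enumerate the local fixed-point and period-two orbit structure on each side of the bifurcation, which is the same for $f_\mu$ and for the normal form $g_\mu=-x\pm\mu x\pm x^3$. On one side of $\mu=0$, both maps are decreasing homeomorphisms whose only periodic orbit near the origin is the transverse fixed point $0$ itself (attracting or repelling according to the signs in the normal form). At $\mu=0$ the situation is the same: a unique transverse fixed point at $0$, still with no period-two orbit. On the opposite side of $\mu=0$, the origin changes stability and a period-two orbit $\{\bar x_L,\bar x_R\}$ (resp.\ $\{x_L,x_R\}$) of the opposite stability bifurcates off; in each case the hypothesis concerning $f_\mu^2$ vs.\ the identity agrees with the hypothesis for $g_\mu^2$.

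Second, I would apply the appropriate conjugacy theorem on each side. For the values of $\mu$ for which only the fixed point $0$ is present, Theorem \ref{T6} (when $0$ is attracting) or its counterpart promised in Remark \ref{rem} (when $0$ is repelling) directly supplies a conjugacy $h_\mu$ between $f_\mu$ and $g_\mu$ on a small symmetric interval around $0$. The same argument at $\mu=0$ handles the bifurcation value. For the values of $\mu$ on which the period-two orbit has appeared, I would use Theorem \ref{T4} to build the conjugacy on $[\bar x_L,\bar x_R]$ (the interval bounded by the period-two points) and Theorem \ref{T5} to extend it to the surrounding intervals $[f_\mu(a),\bar x_L]\cup[\bar x_R,a]$; the two pieces automatically agree at $\bar x_L$ and $\bar x_R$ by continuity, producing a homeomorphism on a full neighbourhood of the origin. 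Since the parameter-homeomorphism $p$ may be taken to be the identity $\alpha=\beta=\mu$, collecting the fibre-homeomorphisms $h_\mu$ gives the required local topological equivalence.

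The main obstacle, and the only point that is not a direct quotation of the Section \ref{Section2} results, is checking that the generalized nondegeneracy conditions of \cite{BaVa99} for a flip really do translate into the one-sided sign conditions $f_\mu^2(x)>x$ or $f_\mu^2(x)<x$ on the intervals $(0,\bar x_R)$, $(\bar x_R,a]$ and their left counterparts that appear in the hypotheses of Theorems \ref{T4}, \ref{T5}, \ref{T6}. This amounts to a local-monotonicity statement for $f_\mu^2-\mathrm{id}$ and follows from the Taylor expansion of the iterate $f_\mu^2$ provided by the nondegeneracy conditions (which is precisely the information that forces the pitchfork-like structure of $f_\mu^2$), but it is the place where the proof must actually use the hypothesis on $f$ rather than simply quote earlier results.
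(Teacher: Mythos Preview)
Your proposal is correct and follows essentially the same approach as the paper: invoke Theorems \ref{T4} and \ref{T5} (together with Remark \ref{rem}) on the side of $\mu=0$ where the period-two orbit is present, and Theorem \ref{T6} (with Remark \ref{rem}) on the other side and at $\mu=0$. Your discussion of the sign conditions on $f_\mu^2-\mathrm{id}$ and of gluing at $\bar x_L,\bar x_R$ is more explicit than the paper's own proof, which simply asserts that the hypotheses of those theorems are met.
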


\begin{proof}
It is sufficient to observe that on one side of $\mu=0$, any map of the family
$f_{\mu}$ has the fixed point $0$ and two $2$-periodic points with the
opposite stability of $0$. Then, $f_{\mu}$ and $g_{\mu}$ satisfy the
conditions of Theorems \ref{T4}, \ref{T5} and Remark \ref{rem}; while on the
other side of $\mu=0$ any system $f_{\mu}$ has only a fixed point which is $0$
and both maps verify Theorems \ref{T6} and Remark \ref{rem}. The same occurs
for $f_{0}$ and $g_{0}$. For each one of these maps there exists a unique
fixed point with the same stability.
\end{proof}

\begin{example}
\textrm{The Theorem \ref{flip} allows us to know that every family of any the
forms
\[
-x\pm\mu x\pm x^{2n+1}+\text{h.o.t.},\quad n\in\mathbb{N}%
\]
is locally topologically equivalent to one of the forms in
(\ref{fliptruncated}). }
\end{example}

\begin{remark}
As is evident by the proofs, these (local) topological equivalences cover all
the points that are relevant for the corresponding local bifurcations.
\end{remark}

\begin{remark}
It is worthwhile to observe that topological equivalences do not depend on the
order of differentiability of the family $f$, which only must verify the
corresponding bifurcation conditions.
\end{remark}

\section{\label{Section4}Conclusions and future research directions}

This paper shows how to find conjugacies which demonstrate the topological
equivalence between any two increasing homeomorphisms with the same number of
fixed points and with the same sequence of semi-stabilities and, for two
decreasing homeomorphisms, both with one fixed point with no periodic orbits
or both with a fixed point and one $2$-period point.

As a consequence, these general results are applied to give a solution for a
classical bifurcation problem which was set out several decades ago, but
remained unpublished until now. Such a classical problem consists in finding
topological conjugacies between families verifying specific bifurcation
conditions and the simplest (truncated) normal form corresponding to the
bifurcation. Actually, the problem is solved in more general way than
originally posed, since the results work even for generalized bifurcation
conditions independently of the order of differentiability of the families considered.

The topological conjugacies for families are \emph{fiber} or \emph{weak} in
the sense stated in \cite{AAIS94}. We leave as a future research work to
demonstrate if it is possible to obtain the continuity of the homeomorphisms
with respect to the parameter appearing in the families.

\section*{Acknowledgements}

Henrique Oliveira was funded by FCT-Portugal through the research project UID/MAT/04459/2013,.

Francisco Balibrea and Jose C. Valverde thank the Ministry of Sciences and
Innovation of Spain for their support for this work through the grant MTM2011-23221.

The final publication is available at Springer via

http://dx.doi.org/10.1007/s00332-016-9347-0.

\newpage

\end{document}